\definecolor{darkgreen}{rgb}{0,0.55,0}
\newcommand{\grad}{\nabla}
\newcommand{\laplace}{\Delta}
\newcommand{\N}{\mathbbm{N}}
\newcommand{\R}{\mathbbm{R}}
\newcommand{\eps}{\varepsilon}
\def\XXint#1#2#3{{\setbox0=\hbox{$#1{#2#3}{\int}$ }
		\vcenter{\hbox{$#2#3$ }}\kern-.59\wd0}}
\newtheorem{prop}{Proposition}
\newtheorem{theorem}{Theorem}
\newtheorem{lemma}{Lemma}
\newtheorem{cor}{Corollary}
\begin{document}
	
	\phantom{ }
	\vspace{4em}
	
	\begin{flushleft}
		{\large \bf On the dynamics of vortices in viscous 2D flows}\\[2em]
		{\normalsize \bf Stefano Ceci and Christian Seis}\\[0.5em]
		\small Institut f\"ur Analysis und Numerik,  Westf\"alische Wilhelms-Universit\"at M\"unster, Germany.\\
		E-mails: ceci@wwu.de, seis@wwu.de \\[3em]
		{\bf Abstract:} 
		We study the 2D Navier--Stokes solution starting from an initial vorticity mildly concentrated near $N$ distinct points in the plane. We prove quantitative estimates on the propagation of concentration near a system of interacting point vortices introduced by Helmholtz and Kirchhoff. Our work extends the previous results in the literature in three ways: The initial vorticity is concentrated in a weak (Wasserstein) sense, it is merely $L^p$ integrable for some $p>2$, and the estimates we derive are uniform with respect to the viscosity.\\[3em]
		{\bf Statements and Declarations:} On behalf of all authors, the corresponding author states that there is no conflict of interest. Data sharing not applicable to this article as no datasets were generated or analysed during the current study.
		
	\end{flushleft}
	
	\vspace{2em}

	\section{Introduction}
	
	Coherent flow structures are widely observed in two-dimensional turbulent fluid motions. Isolated regions of concentrated vorticity emerge, for instance, as a result of stirring or solid body interactions, and they persist for rather long time scales while moving with the ambient background flow and interacting with nearby vortex regions.
	Vortex regions that are sufficiently far from each other can be considered as rigid disc-like objects, whose interactions reduce approximately to the interactions of their centers.
	
	The idea of studying the motion of vortices in two-dimensional fluid flows by means of an idealized point vortex system dates back to the second half of the nineteenth century and the work of Helmholtz \cite{Helmholtz1858}. Arguing formally, Helmholtz derived an ODE model for interacting point vortices in inviscid incompressible fluids. Kirchhoff later documented the Hamiltonian structure of this system \cite{Kirchhoff76}, and, as a result, the ODE model is nowadays commonly referred to as the Helmholtz--Kirchhoff point vortex system.
	
	The Helmholtz--Kirchhoff system is a reasonable description of the actual vortex dynamics of isolated and highly concentrated vortex regions. In these situations, if the fluid kinematic viscosity is small, it seems quite natural to expect that the vortices will exhibit a behavior similar to the Helmholtz--Kirchhoff idealization. Likewise, it is reasonable to expect that this description will become invalid in the event of collisions and merging of vortices or if vortex regions dissolve in the ambient background flow as a result of viscous friction. The goal of the present paper is to derive estimates that describe to what extent the point vortex system captures the motion of vortex regions in incompressible viscous flows.
	
	For inviscid fluids, the first rigorous work in this direction is due to Marchioro and Pulvirenti \cite{MarchioroPulvirenti83}, who proved that bounded vortex patch solutions to the Euler equation remain close to the point vortices during the evolution. Their strategy exploits the regularity of the velocity field sufficiently far from the vortex patches and the symmetry properties of the two-dimensional Biot--Savart kernel. The original work was gradually improved in many subsequent articles, see for example \cite{MarchioroPulvirenti93, Marchioro98, ButtaMarchioro18}.
	Other methods have been employed to study the connection of the point vortex system with the Euler equation. We mention the approach by Turkington \cite{Turkington87}, which relies on the conservation of the energy. This has the advantage that it works also in situations where the Biot--Savart operator does not possess certain symmetry features, for example in the setting of three-dimensional axisymmetric fluids without swirl, where it has been successfully applied \cite{BenedettoCagliotiMarchioro00}. It does not seem however suited to studying several interacting vortices; in this case a combination with the Marchioro--Pulvirenti method has been used \cite{ButtaMarchioro19, ButtaCavallaroMarchioro21}. Another technique effectively implemented to this context is the so-called gluing method \cite{DavilaDelPinoMussoWei18, DavilaDelPinoMussoWei20}. In these works, solutions with highly concentrated vorticities containing precise information on the vortex cores are constructed in the two-dimensional case, and helical vortex filaments
	are built in the three-dimensional setting.
	
	In the context of viscous fluids, the connection with the Helmholtz--Kirchhoff point vortices has been shown for the Navier--Stokes equation \cite{Marchioro90, Marchioro98_2, CetroneSerafini18}, where the authors consider bounded vorticities initially   sharply concentrated in separated regions and prove that they converge in the inviscid limit to the point vortices. 
	A more singular initial datum was considered by Gallay \cite{Gallay11}, namely a collection of several point vortices. He proved that the corresponding Navier--Stokes vorticity concentrates, in terms of a weighted $L^2$ norm, near a combination of Lamb--Oseen vortices centered around a viscous regularization of the point vortex system (see also the nice review paper \cite{Gallay12}). A Lamb--Oseen vortex (see expression \eqref{3.lo}) is the solution corresponding to an initial vorticity concentrated on a single point, and it can in some ways be regarded as the fundamental solution to the two-dimensional Navier--Stokes vorticity equation. Gallay gives an estimate on the rate of concentration, proving that it is proportional to $\nu t$, where $\nu$ is the viscosity and $t$ is the time. As consequence of this result, the convergence of the Navier--Stokes solution to the Helmholtz--Kirchhoff point vortex system for $\nu \to 0$ is also established.

It would be desirable to establish a relation between solutions to the Navier--Stokes equations and the (viscous) point vortex dynamics that has both features: \emph{First}, it provides  stability estimates in the sense of the Marchioro--Pulvirenti strategy showing that solutions that start close to point vortices remain close to vortices. \emph{Second}, it describes the accurate shape of vortices that are actually spreading as a result of viscosity.

In the present work, we take a tiny step towards this goal by addressing the first property and by improving in several  aspects  the aforementioned works \cite{Marchioro90, Marchioro98_2} by Marchioro on the Navier--Stokes equations:
		As a measure for vortex concentration, we work with Wasserstein distances. Weak notions of concentration are suitable in the viscous setting due to the fact that the viscosity spreads instantaneously over the full space. However, also in the inviscid case, weak notions of concentration are favorable as they allow for more general vortex configurations like elongated vortex regions or even long tentacles, as can be observed in chaotic or turbulent flows. We have considered concentration in the Wasserstein sense in our earlier works on the Euler equation \cite{CeciSeis21, CeciSeis21b}. In  the context of three-dimensional Euler filaments, geometric flat norms, which are related to Wasserstein distances in 2D, were considered \cite{JerrardSeis17}.

	Furthermore, we establish estimates that hold uniformly in the viscosity constant, independently from the scale of concentration. This way our results apply also to initial data that are given by a collection of point vortices as studied by Gallay \cite{Gallay11}, and our estimates describe the sharp rates of vortex spreading due to viscosity.
		
	We finally consider configurations whose  initial vorticity is allowed to be unbounded but in $L^p$, for some $p>2$, with almost no assumption on the magnitude of the $L^p$ norm. This same condition was studied \cite{CeciSeis21b} in the Euler setting and extends the corresponding requirement on the $L^\infty$ norm that was considered in \cite{Marchioro98_2, CetroneSerafini18}. 
 
	%

%
\medskip

\emph{Organization of the paper.}
	In Section \ref{Sec2} we introduce precisely the mathematical setting and state the main results. In Section \ref{Sec3} we present the proofs.

	\section{Mathematical setting}\label{Sec2}
	
	We study the dynamics of vortices in a viscous incompressible fluid in the two-dimensional plane $\R^2$. This motion can be modelled via the Navier--Stokes equations in vorticity form. These read
	\begin{equation}\label{3.1}
		\partial_t \omega + u\cdot \grad \omega = \nu \laplace\omega \quad\text{in }(0,+\infty)\times\R^2,
	\end{equation}
	where $u:(0,+\infty)\times\R^2\to\R^2$ is the velocity field and $\omega:(0,+\infty)\times\R^2\to\R$ is the scalar vorticity field that measures the tendency of the fluid to rotate. The incompressibility condition is translated into the mathematical constraint that the velocity field has zero divergence,
	\begin{equation}\label{3.48}
		\nabla\cdot u=0 \quad\text{in }(0,+\infty)\times\R^2.
	\end{equation}
	The vorticity field can be computed from the velocity as the rotation $\omega = \partial_1 u_2 - \partial_2 u_1$, and vice versa the velocity $u$ can be recovered from $\omega$ via the Biot--Savart law
	\begin{equation}\label{3.bs}
		u(t,x) = K*\omega(t,x) = \int_{\mathbb{R}^2} K(x-y)\,\omega(t,y)\,dy,
	\end{equation}
	where the convolution is understood in space. Here, the function $K$ is the Biot--Savart kernel
	\begin{equation*}
		K(z) = \frac{1}{2\pi}\frac{z^\perp}{|z|^2},
	\end{equation*}
	and $z^\perp$ denotes the counter-clockwise rotation of the vector $z$ by $90$ degrees. Finally, the constant $\nu>0$ is the kinematic viscosity of the fluid.
	
	We consider a compactly supported initial datum $\bar{\omega}\in L^p(\R^2)$ for some $p>2$. It is well-known (see \cite{Gallay12} and references therein) that there exists a unique solution $\omega\in C\big( (0,+\infty); L^1\cap L^\infty(\mathbb{R}^2) \big)$ to equation \eqref{3.1}.
	Such a solution has been shown to be smooth in space and time \cite{BenArtzi94}, and its Lebesgue norms are non-increasing in time, that is
	\begin{equation*}
		\|\omega(t)\|_{L^1} \le \|\bar{\omega}\|_{L^1}, \quad \|\omega(t)\|_{L^p} \le \|\bar{\omega}\|_{L^p}.
	\end{equation*}
	
	We study the situation where the initial vorticity is split into $N$ components of definite sign
	\begin{equation*}
		\bar{\omega}=\sum_{i=1}^{N}\bar{\omega}_i,
	\end{equation*}
	where for every $i\in\{1,\ldots,N\}$ it holds either $\bar{\omega}_i \ge0$ or $\bar\omega_i\le 0$. The motion of the vorticity components obeys the advection-diffusion equation
	\begin{equation}\label{3.5}
		\begin{cases}
			\partial_t \omega_i + u\cdot \grad\omega_i = \nu \laplace \omega_i , \\
			\omega_i(0,\cdot) = \bar \omega_i(\cdot) .
		\end{cases}
	\end{equation}
	Hence their sign is preserved over time and, because of the uniqueness of the solutions to linear advection-diffusion equations, the vorticity $\omega$ remains expressed as the sum of its components,
	\begin{equation*}
		\omega(t,\cdot) = \sum_{i=1}^{N}\omega_i(t,\cdot).
	\end{equation*}
	We observe that, thanks to the divergence-free condition \eqref{3.48} and to the absence of domain boundaries, the intensity, that is the space integral of each vortex component, is constant in time,
	\begin{equation*}
		a_i = \int_{\R^2}\omega_i(t,x)\,dx = \int_{\R^2}\bar{\omega}_i\,dx.
	\end{equation*}
Depending on the sign of the vorticity components, we will either have that $a_i>0$ or $a_i<0$, so that $\omega_i/a_i$ is a well-defined probability distribution.
	
	We suppose that the vortex components are sharply concentrated around $N$ distinct points $\bar{Y}_1,\ldots,\bar{Y}_N\in\R^2$ in the sense that
	\begin{equation}\label{3.2}
		W_2\left( \frac{\bar{\omega}_i}{a_i},\,\delta_{\bar{Y}_i} \right) \le \varepsilon,
	\end{equation}
	for all $i=1,\ldots,N$. 	The parameter $\eps>0$ is the concentration scale, that will be assumed to be as small as needed. Here $W_2$ is the $2$-Wasserstein distance, which, in the case that one of the measures considered is atomic, is just the square root of the variance,
	\begin{equation}\label{3.55}
		W_2\left( \frac{\bar{\omega}_i}{a_i},\,\delta_{\bar{Y}_i} \right)^2 = \frac{1}{a_i} \int_{\R^2} |x-\bar{Y}_i|^2\,\bar{\omega}_i(x)\,dx,
	\end{equation}
	and measures the average \enquote{size} of the vortex region.
	We refer to Villani's book \cite{Villani03} for more information on Wasserstein distances. To understand the role of $\eps$, notice that condition \eqref{3.2} is satisfied for example if $\bar{\omega}_i$ is supported in a ball of radius $\eps$ centred on $\bar{Y}_i$.
	An easy computation shows that the Wasserstein distance is minimized when the atomic measure is located on the center of vorticity
	\begin{equation*}
		\bar{X}_i = \frac{1}{a_i} \int_{\R^2}x\bar{\omega}_i(x)\,dx,
	\end{equation*}
	that is
	\begin{equation}\label{3.6}
		W_2\left( \frac{\bar{\omega}_i}{a_i},\,\delta_{\bar{X}_i} \right) \le W_2\left( \frac{\bar{\omega}_i}{a_i},\,\delta_{\bar{Y}_i} \right).
	\end{equation}
	
	Since Wasserstein distances metrize the weak convergence in the sense of measures, see Theorem 7.12 in \cite{Villani03}, assumption \eqref{3.2} implies that the  rescaled vortex component $\omega_i/a_i$ converges weakly as $\eps$ goes to zero to an atomic measure. We suppose that the intensities $a_i$ are independent of $\eps$ and, accordingly, the $L^p$ norm of the components must diverge as $\eps\to0$. Here, we assume that
	\begin{equation}\label{3.30}
		\|\bar{\omega}\|_{L^p} \le \eps^{-\gamma},
	\end{equation}
	where $\gamma$ is a fixed positive number. This is the same condition that we considered earlier in the inviscid setting  \cite{CeciSeis21b}, while similar requirements on the $L^\infty$ norm were studied e.g.~in \cite{Marchioro90,Marchioro98_2,CetroneSerafini18} for the Navier--Stokes equations and in \cite{Marchioro98,CapriniMarchioro15} for the Euler equations. Notice that the two assumptions \eqref{3.2} and \eqref{3.30} enforce that $\gamma \ge 2-2/p$ as a consequence of the interpolation $1 \lesssim \|f\|_{L^p}^{p} W_2(f,\delta_X)^{2p-2}$, which holds true for any probability distribution $f$. We furthermore  observe that, since the $L^p$ norm of every component is non-increasing by equation \eqref{3.5}, assumption \eqref{3.30} translates into a condition on each $\omega_i$ for positive times,
	\begin{equation}\label{3.4}
		\|\omega_i(t)\|_{L^p} \le \|\bar{\omega}_i\|_{L^p} \le \|\bar{\omega}\|_{L^p}.
	\end{equation}
	
	Finally, we make a last hypothesis on the initial configuration, namely we suppose that there is not much vorticity far away from their centers. More precisely, we assume that there exists a  radius $R\gg\eps$ 	 and a constant $\beta$  such that
\begin{equation}\label{303}
m_i(0,R) \le \eps^{\beta},
\end{equation}
where $m_i(t,R)$ is the vorticity portion of the $i$-th component at time $t$ lying at least at distance $R$ from its center $X_i(t)$, that is,
	\begin{equation*}
		m_i(t,R)=\frac{1}{a_i}\int_{B_R(X_i(t))^c} \omega_i(t,x)\,dx.
	\end{equation*}
We will assume that
\begin{equation}
\label{317}
 R = R_0 (\eps+\sqrt{\nu})^{\delta},
\end{equation}
for some $\delta\in [0,1/2)$ and some constant $R_0$.

	Our goal is to estimate to what extent the vortex components remain close to the Helmholtz--Kirchhoff point-vortex system starting from $\bar{Y}_1,\ldots,\bar{Y}_N$, that is a collection of points that move according to the equations
	\begin{equation}\label{3.pvs}
		\begin{cases}
			\frac{d}{dt}Y_i(t) = \sum_{\underset{j \neq i}{j=1}}^{N} a_j K(Y_i(t)-Y_j(t)), \\
			Y_i(0) = \bar{Y}_i.
		\end{cases}\quad \forall i=1,\ldots,N.
	\end{equation}
	We will assume that the actual point vortices are not colliding in some time interval $[0,T_c]$, and that their  minimal distance
\[
	 d = \min_{i\not= j}\inf_{t\in[0,T_c]} | Y_i(t)- Y_j(t)|,
\]
		is large compared to $R$, say $d \ge 12 R_0$.

Our first result gives a componentwise estimate.
	
	\begin{theorem}\label{th3.1}
	There exist $\eps_0$, $\nu_0\in(0,1)$, $\beta_0\in(1,\infty)$ and a time $T\le T_c$ such that for any concentration parameter $\eps\in(0,\eps_0)$, viscosity $\nu\in(0,\nu_0)$ and decay parameter $\beta\in(\beta_0,\infty)$ the following holds: 	Let $\bar{Y}_1,\ldots,\bar{Y}_N \in\mathbb{R}^2$ be pairwise distinct points and consider their evolution ${Y}_1,\ldots,{Y}_N \in\mathbb{R}^2$ through the point-vortex system \eqref{3.pvs}. Let $\omega_i$ be an evolving vortex region \eqref{3.5} that is initially   sharply concentrated around the point vortices \eqref{3.2}, decaying away from the centers \eqref{303} and satisfying the bound \eqref{3.30}. Then it holds for all $i=1,\ldots,N$ and $t\in[0,T]$ that
		\begin{equation*}
			W_2 \left( \frac{\omega_i(t)}{a_i},\,\delta_{Y_i(t)} \right) \le C e^{\Lambda t}\,(\varepsilon+\sqrt{\nu t})
		\end{equation*}
where $C$ and $\Lambda$ are  constants that are independent of $\eps$, $\nu$ and $R$. Moreover, $T\ge c$ for some constant $c$ independent of $\eps$, $\nu$ and $R$. If $T_c=\infty$ and $\delta>0$ in \eqref{317}, there is the stronger estimate
\[
T\ge c\log\frac1{\eps+\sqrt{\nu}}.
\]
	\end{theorem}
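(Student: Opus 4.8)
The plan is to track the evolution of the weighted second moment $J_i(t) = \frac{1}{a_i}\int_{\R^2}|x - Y_i(t)|^2\,\omega_i(t,x)\,dx$, which by \eqref{3.55} equals $W_2(\omega_i(t)/a_i,\delta_{Y_i(t)})^2$, and to show it satisfies a Gronwall-type differential inequality of the form $\frac{d}{dt}J_i \le \Lambda' J_i + C'\nu + (\text{small errors})$. First I would differentiate $J_i$ in time, using the equation \eqref{3.5} for $\partial_t\omega_i$ and the ODE \eqref{3.pvs} for $\dot Y_i$. The viscous term $\nu\int|x-Y_i|^2\laplace\omega_i = \nu\int\laplace(|x-Y_i|^2)\,\omega_i = 4\nu\int\omega_i = 4\nu a_i$ produces exactly the linear-in-$t$ spreading term $4\nu t$, which after taking square roots gives the $\sqrt{\nu t}$ contribution. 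The transport term $-\int|x-Y_i|^2\,u\cdot\grad\omega_i = \int u\cdot\grad(|x-Y_i|^2)\,\omega_i = 2\int (x-Y_i)\cdot u(t,x)\,\omega_i(t,x)\,dx$ must be split according to the decomposition $u = K*\omega = \sum_j K*\omega_j$: the self-interaction term $j=i$ and the mutual-interaction terms $j\neq i$.

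The mutual terms are handled by comparing $K*\omega_j$ near $Y_i$ with the point-vortex velocity $a_jK(Y_i - Y_j)$ that appears in \eqref{3.pvs}; since the vortices stay at mutual distance $\ge d \gg R$ on $[0,T_c]$, the kernel $K$ is Lipschitz on the relevant region, and the discrepancy is controlled by $W_1$ (hence $W_2$) distances of the other components to their centers plus the tail contribution $m_j(t,R)$. This is where the hypotheses \eqref{303}, \eqref{317}, and $d\ge 12R_0$ enter, and where one must also propagate a bound on $m_i(t,R)$ and on $|X_i(t) - Y_i(t)|$ forward in time. The self-interaction term $2\int(x-Y_i)\cdot(K*\omega_i)\,\omega_i$ is the delicate one: one exploits the antisymmetry of $K$ to symmetrize it into $\frac1\pi\iint (x-y)\cdot\frac{(x-y)^\perp}{|x-y|^2}\cdot(\cdots)$ — the leading singular part cancels by the symmetry $(z\cdot z^\perp = 0)$ — leaving a remainder that, after splitting into a near-diagonal region (handled using $\|\omega_i\|_{L^p}\le\eps^{-\gamma}$ via Hölder, with the constraint $\gamma\ge 2-2/p$ making the estimate uniform in $\eps$) and a far region (handled by the tail), is bounded by $C J_i$ plus lower-order terms. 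Collecting everything yields $\frac{d}{dt}J_i \le \Lambda' J_i + 4\nu + C(\eps^2 + \text{tails})$, and Gronwall's lemma with $J_i(0)\le\eps^2$ gives $J_i(t)\le Ce^{2\Lambda t}(\eps^2 + \nu t)$, i.e. the claimed bound after taking the square root and absorbing constants.

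For the lifespan statement, the bootstrap is self-consistent as long as the a priori smallness assumptions used in the mutual-interaction estimates remain valid — concretely, as long as $W_2(\omega_i(t)/a_i,\delta_{Y_i(t)})$ and $m_i(t,R)$ and $|X_i - Y_i|$ stay below fixed thresholds measured against $R$ and $d$. Since $W_2 \le Ce^{\Lambda t}(\eps+\sqrt{\nu t})$ and $R = R_0(\eps+\sqrt\nu)^\delta$, the condition $W_2 \lesssim R$ reads $e^{\Lambda t}(\eps+\sqrt{\nu t}) \lesssim (\eps+\sqrt\nu)^\delta$; for $\delta = 0$ this forces $t$ below an absolute constant $c$ (first assertion), while for $\delta\in(0,1/2)$ and $T_c=\infty$ one may take $e^{\Lambda t} \sim (\eps+\sqrt\nu)^{\delta-1}$, i.e. $t \sim \frac{1-\delta}{\Lambda}\log\frac{1}{\eps+\sqrt\nu}$, giving the logarithmic lower bound. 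The main obstacle I anticipate is making the self-interaction estimate genuinely uniform in $\nu$ and in the concentration scale: one cannot use any lower bound on the core size, so the near-diagonal singular integral must be controlled purely through the $L^p$ bound \eqref{3.4} and the moment $J_i$ itself, and one has to carefully verify that the propagated tail bound $m_i(t,R)\le\eps^{\beta_0}$ survives — this is presumably why the threshold $\beta_0$ must be taken large and why $\delta<1/2$ is required.
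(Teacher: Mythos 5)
Your broad strategy — differentiate a weighted second moment, isolate the viscous term $4\nu$, split the transport term into self- and mutual-interaction via a cutoff, and run a Gronwall argument — is also the paper's, which works with $W_i(t)=W_2(\omega_i(t)/a_i,\delta_{X_i(t)})$ centered at the vorticity barycenter $X_i$ and later translates to $Y_i$ by the triangle inequality, combining Proposition~\ref{lem3.3} with Theorem~\ref{th3.2}. Centering directly at $Y_i(t)$ as you propose is a viable bookkeeping variant (the cross-term $-2\dot Y_i\cdot(X_i-Y_i)$ cancels against the piece of the external-field integral carrying $\dot Y_i$), but one claim is simply wrong: after symmetrizing, the self-interaction integral $\iint (x-y)\cdot K(x-y)\,\omega_i(x)\omega_i(y)\,dx\,dy$ vanishes \emph{exactly}, not merely at leading order, because $z\cdot z^\perp\equiv 0$; there is no near-diagonal ``remainder'' and the $L^p$ bound is not needed on that term at all. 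Where $\|\bar\omega\|_{L^p}\le\eps^{-\gamma}$ (and the constraint $\gamma\ge 2-2/p$) genuinely enters is in bounding the non-Lipschitz piece $F_i^B$ of the \emph{external} field via the generalized H\"older estimate of Lemma~\ref{lem3.1} — your sketch places this effort on the wrong term.

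The real gap is the lifespan argument. You propose to close the bootstrap by requiring $W_2\lesssim R$ and keeping $m_i(t,R)$ ``below fixed thresholds,'' which implicitly invokes the Chebyshev bound $m_i(t,d/6)\lesssim W_i^2/d^2\lesssim e^{\Lambda t}(\eps^2+\nu t)$. But the smallness that must actually be propagated is $m_i(t,d/6)\le\eps^\alpha+(\nu t)^{\alpha/2}$ with $\alpha\ge\frac{3p-2}{p-2}+\frac{2p\gamma}{p-2}>3$ from \eqref{3.alpha}; only with such an $\alpha$ do the products $m_j^\theta\|\omega_j\|_{L^p}^{1-\theta}$ and $m_i^{1/2}\|\omega_j\|_{L^p}$ appearing in Lemma~\ref{lem3.2} stay of order $\eps$ (or $\sqrt{\nu t}$). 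Chebyshev gives at best order $\eps^2+\nu t$, which is far too large for $\alpha>2$, so your consistency check does not close and no lower bound on $T$ follows. What actually closes it is the Marchioro-type iteration in Lemma~\ref{lem3.6} and Proposition~\ref{lem3.7}: one introduces the smoothed outer-vorticity functions $\mu_i(t,\rho,r)$ on a chain of $M$ radii decreasing from $d/6$ down to $R$, derives the recursive estimate \eqref{3.38}, iterates $M\sim\log\frac1{\eps}$ (or $\sim\log\frac1{\nu}$) times, and only then invokes the initial decay \eqref{303} — this is precisely where the threshold $\beta_0$ appears and where the dichotomy $\delta=0$ versus $\delta>0$ in \eqref{317} produces the two distinct lower bounds on $T$. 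Your proposal contains no analogue of this iteration, so the stated lifespan estimates are unsupported.
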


	The estimate in Theorem \ref{th3.1} features the sharp rate of viscous spreading $\sqrt{\nu t}$:
		In the context of the Navier--Stokes equations, the support of each initial component is instantaneously distributed all over $\R^2$. This effect can be nicely observed when studying the evolution of the self-similar Lamb--Oseen vortex
	\begin{equation}\label{3.lo}
		\omega_{\text{LO}}(t,x) = \frac{1}{\nu t} \Gamma\left(\frac{x}{\sqrt{\nu t}}\right),\quad \Gamma(\xi)  = \frac1{4\pi} e^{-|\xi|^2/4},
	\end{equation}
	which solves the Navier--Stokes equation with initial datum $\delta_0$ (i.e. the Dirac measure concentrated on the origin).
	Apparently, the solution is positive for all positive times, and the vortex core grows in time at the rate $\sqrt{\nu t}$. This growth law can be expressed in terms of the Wasserstein distance,
	\[
	W_2(\omega_{\text{LO}}(t),\delta_0)\sim \sqrt{\nu t},
	\] 
	as can be verified via a straightforward scaling argument. Our result thus  proves that the general ``vortex components'' described initially in  \eqref{3.2} show a similar (optimal) growth. Moreover, in the limit $\eps\to0$, our result describes the optimal spreading rates for the Navier--Stokes equations with atomic initial data.

	Theorem \ref{th3.1} can also be stated as an estimate on the full solution (rather than the single vortex components) in terms of the $1$-Wasserstein distance $W_1$. Thanks to the dual Kantorovich--Rubinstein representation
	\begin{equation*}
		W_1(f,g) = \sup\left\{ \int_{\R^2}(f-g)\zeta\,dx:\;\|\nabla\zeta\|_{L^\infty}\le1 \right\},
	\end{equation*}
	see for example Theorem 1.14 in \cite{Villani03}, $W_1$ is indeed well-defined merely if $f$ and $g$ have the same average or total mass, with no requirement on their sign.
	
	\begin{cor}\label{cor3.1}
		Under the assumptions of Theorem \ref{th3.1}, it holds that
		\begin{equation*}
			W_1 \left( \omega(t), \sum_{i=1}^{N}a_i \delta_{Y_i(t)} \right) \le C e^{\Lambda t} (\eps+\sqrt{\nu t})
		\end{equation*}
		for all $t\le T$.
	\end{cor}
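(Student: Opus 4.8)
The plan is to reduce the full‑solution estimate in $W_1$ to the componentwise estimates in $W_2$ from Theorem~\ref{th3.1}, using the Kantorovich--Rubinstein duality and the linear (sub‑additive) structure of that dual formulation. First I would fix $t\le T$ and a test function $\zeta$ with $\|\nabla\zeta\|_{L^\infty}\le1$, and exploit the decomposition $\omega(t)=\sum_{i=1}^N\omega_i(t)$ together with $\sum_i a_i\delta_{Y_i(t)}$ being matched component by component, to split
\begin{equation*}
\int_{\R^2}\Big(\omega(t)-\sum_{i=1}^N a_i\delta_{Y_i(t)}\Big)\zeta\,dx
=\sum_{i=1}^N\Big(\int_{\R^2}\omega_i(t,x)\,\zeta(x)\,dx-a_i\,\zeta(Y_i(t))\Big).
\end{equation*}

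Next, for each $i$ I would rewrite the $i$‑th summand as $a_i\big(\int \tfrac{\omega_i(t)}{a_i}\zeta-\zeta(Y_i(t))\big)$ and estimate its absolute value by $|a_i|\,W_1\big(\tfrac{\omega_i(t)}{a_i},\delta_{Y_i(t)}\big)$. This uses that $\omega_i(t)/a_i$ is a genuine probability measure, so that $W_1$ here is the ordinary Wasserstein distance between probability measures, and that both $\zeta$ and $-\zeta$ are $1$‑Lipschitz, so the Kantorovich--Rubinstein representation applies regardless of the sign of $a_i$. By Jensen's inequality (equivalently, Cauchy--Schwarz applied to an optimal coupling), $W_1\le W_2$ for probability measures, so the $i$‑th term is bounded by $|a_i|\,W_2\big(\tfrac{\omega_i(t)}{a_i},\delta_{Y_i(t)}\big)$.

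Then I would invoke Theorem~\ref{th3.1}, which for $t\le T$ gives $W_2\big(\tfrac{\omega_i(t)}{a_i},\delta_{Y_i(t)}\big)\le Ce^{\Lambda t}(\eps+\sqrt{\nu t})$ for every $i$. Summing over $i$ and taking the supremum over admissible $\zeta$ yields
\begin{equation*}
W_1\Big(\omega(t),\sum_{i=1}^N a_i\delta_{Y_i(t)}\Big)\le\Big(\sum_{i=1}^N|a_i|\Big)\,Ce^{\Lambda t}(\eps+\sqrt{\nu t}),
\end{equation*}
and absorbing the fixed factor $\sum_i|a_i|$ into the constant $C$ concludes the argument.

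There is essentially no serious obstacle here; the only point deserving a little care is that $W_1$ between the unbalanced, possibly sign‑changing objects $\omega(t)$ and $\sum_i a_i\delta_{Y_i(t)}$ must be read through the dual formula recalled just before the statement (the two do share the same total mass $\sum_i a_i$), and that the sub‑additivity we use is the one inherited from that linear dual representation — not from gluing transport plans, which would require sign‑definiteness of the full vorticity and is therefore unavailable.
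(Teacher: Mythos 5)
Your argument is correct and follows essentially the same route as the paper: decompose via the Kantorovich--Rubinstein dual to reduce to the components $W_1(\omega_i(t),a_i\delta_{Y_i(t)})$, then pass to $W_2$ by Jensen's inequality and invoke Theorem~\ref{th3.1}. The only cosmetic difference is that the paper writes the intermediate componentwise $W_1$ explicitly as a first moment before applying Jensen, whereas you quote the abstract inequality $W_1\le W_2$, which is the same content.
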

	
	This implies weak convergence in the sense of measures of the Navier--Stokes vorticity $\omega(t)$ to the \enquote{point-vortex measure} $\sum_{i=1}^{N}a_i\delta_{Y_i(t)}$ if we consider the simultaneous limits $\eps\to 0$, $\nu\to 0$, with a convergence rate that, up to an exponential in time, remains of the order of $\eps$, as was assumed for the initial configuration in \eqref{3.2}, plus a correction term $\sqrt{\nu t}$ due to the expansion of the vortices caused by the viscosity.
	
	We remark that the point-vortex system \eqref{3.pvs} can be regarded as a very weak solution to the Euler equation, as was observed by Schochet \cite{Schochet96}.
	Hence, this result may also be interpreted as a stability estimate between the Navier--Stokes and Euler equations.
	Such a bound could be rigorously obtained by a simple application of the triangle inequality when combined with our earlier result \cite{CeciSeis21b}, if suitable estimates on the convergence of viscous to inviscid solutions were available. These would be optimal presumably if an order of $\sqrt{\nu t}$ on the $W_1$ distance between the Navier--Stokes and Euler vorticities could be achieved, but, even in the Yudovich setting of integrable and bounded vorticities, this has not yet been established in general, see e.g. \cite{ConstantinWu95, ConstantinWu96, Chemin96, Seis21}. It is interesting to note that, in the setting considered in this paper, one sees the expected rate $\sqrt{\nu t}$ (at least for viscosities of the order of $\eps^2/t$) in the inviscid limit $\nu\to0$.
	
	We can also express the estimate in terms of the vorticity centers
	\begin{equation*}
		X_i(t) = \frac{1}{a_i} \int_{\R^2}x\omega_i(t,x)\,dx
	\end{equation*}
	of the components and their velocities as follows.
	
	\begin{theorem}\label{th3.2}
		Under the assumptions of Theorem \ref{th3.1}, it holds that
		\begin{equation*}
			|X_i(t)-Y_i(t)| \le C e^{\Lambda t} \, (\varepsilon+\sqrt{\nu t}) \quad\text{and}\quad  \left| \frac{dX_i(t)}{dt}-\frac{dY_i(t)}{dt} \right| \le C e^{\Lambda t} \, (\varepsilon+\sqrt{\nu t})
		\end{equation*}
		for all $t\le T$ and $i=1,\ldots,N$.
	\end{theorem}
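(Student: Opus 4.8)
The plan is to derive both inequalities from Theorem~\ref{th3.1} (together with the decay information that its proof produces along the way), exploiting the antisymmetry of the Biot--Savart kernel in the spirit of Marchioro and Pulvirenti. The position bound is almost immediate: since $X_i(t)$ is the barycentre of the probability measure $\omega_i(t)/a_i$, for every $Y\in\R^2$ one has
\[
|X_i(t)-Y| = \Big| \int_{\R^2}(x-Y)\,\tfrac{\omega_i(t,x)}{a_i}\,dx \Big| \le \int_{\R^2}|x-Y|\,\tfrac{\omega_i(t,x)}{a_i}\,dx \le W_2\!\Big( \tfrac{\omega_i(t)}{a_i},\,\delta_Y \Big),
\]
the last inequality being Jensen's (equivalently $W_1\le W_2$ when one of the measures is an atom; compare \eqref{3.55}). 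Choosing $Y=Y_i(t)$ and invoking Theorem~\ref{th3.1} gives $|X_i(t)-Y_i(t)|\le Ce^{\Lambda t}(\varepsilon+\sqrt{\nu t})$, the first claim.

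For the velocity estimate I would first establish the identity $\frac{d}{dt}X_i(t) = \frac{1}{a_i}\int_{\R^2}u(t,x)\,\omega_i(t,x)\,dx$, by testing the vorticity equation \eqref{3.5} against the coordinate functions, using the incompressibility \eqref{3.48} and integrating by parts (so that $\int x\,\Delta\omega_i = 0$ and $\int x\,\nabla\cdot(u\,\omega_i) = -\int u\,\omega_i$); the contributions at infinity vanish and the first moment is finite and $C^1$ in $t$ because $\omega_i(t)$ enjoys Gaussian decay. Splitting $u = \sum_j K*\omega_j$, the self-interaction term $\iint K(x-y)\,\omega_i(x)\,\omega_i(y)\,dx\,dy$ vanishes by the oddness of $K$ (the double integral being absolutely convergent since $\omega_i\in L^1\cap L^p$ with $p>2$); using $\int\omega_i/a_i = \int\omega_j/a_j = 1$ to rewrite \eqref{3.pvs} as $\frac{d}{dt}Y_i(t) = \sum_{j\neq i}\frac{1}{a_i}\iint K(Y_i(t)-Y_j(t))\,\omega_i(t,x)\,\omega_j(t,y)\,dx\,dy$, one arrives at
\[
\frac{d}{dt}X_i(t) - \frac{d}{dt}Y_i(t) = \sum_{j\neq i}\frac{1}{a_i}\iint \big[ K(x-y) - K(Y_i(t)-Y_j(t)) \big]\,\omega_i(t,x)\,\omega_j(t,y)\,dx\,dy.
\]

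Then I would bound each summand by splitting the domain of integration into the ``near'' part $B_R(X_i(t))\times B_R(X_j(t))$ and its complement. On the near part, the non-collision assumption, the position bound just proved, and $R\le R_0\le d/12$ force $x-y$, the vector $Y_i(t)-Y_j(t)$, and the segment joining them all to stay at distance $\gtrsim d$ from the origin (once $\varepsilon,\nu$ are small and $T$ is chosen so that $e^{\Lambda T}(\varepsilon+\sqrt{\nu})$ is small, which Theorem~\ref{th3.1} permits); there $K$ is Lipschitz with constant $\lesssim d^{-2}$, so the integrand is $\lesssim d^{-2}(|x-Y_i|+|y-Y_j|)$, and integrating — using $\frac{1}{|a_i|}\int|x-Y_i|\,|\omega_i|\,dx \le W_2(\omega_i/a_i,\delta_{Y_i})$, the symmetric bound in $j$, and Theorem~\ref{th3.1} — controls this part by $\lesssim d^{-2}e^{\Lambda t}(\varepsilon+\sqrt{\nu t})$. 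On the complement, where $x\notin B_R(X_i(t))$ or $y\notin B_R(X_j(t))$, I would bound for instance
\[
\frac{1}{|a_i|}\int_{B_R(X_i(t))^c}|\omega_i(t,x)|\int_{\R^2}|K(x-y)|\,|\omega_j(t,y)|\,dy\,dx \le \Big( \sup_{z\in\R^2}\int_{\R^2}\tfrac{|\omega_j(t,y)|}{2\pi|z-y|}\,dy \Big)\,m_i(t,R),
\]
and symmetrically with $i$ and $j$ interchanged; the supremum is $\lesssim \|\omega_j(t)\|_{L^1}+\|\omega_j(t)\|_{L^p}\lesssim 1+\varepsilon^{-\gamma}$ by \eqref{3.4}, \eqref{3.30} and the local $L^{p'}$-integrability of $|z|^{-1}$ ($p'=p/(p-1)<2$), while $m_i(t,R)$ is controlled by the decay bound established in the proof of Theorem~\ref{th3.1}, so that for $\beta>\beta_0$ large enough the complementary contribution is $\lesssim \varepsilon \le \varepsilon+\sqrt{\nu t}$. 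Summing over $j\neq i$ and enlarging $C$ and $\Lambda$ then yields the velocity estimate.

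The main obstacle is the far-field term: one needs the decay hypothesis \eqref{303} to propagate to positive times with an exponent that still beats $\varepsilon^{1+\gamma}$. This is precisely the flux-type estimate that the proof of Theorem~\ref{th3.1} must produce anyway in order to close its own $W_2$ bound, so here it is only quoted; the remaining ingredients (the position bound, the algebraic identity for $\frac{d}{dt}X_i-\frac{d}{dt}Y_i$, the near-field Lipschitz estimate) are essentially free consequences of Theorem~\ref{th3.1}. A minor routine matter is the justification of the time-differentiation of the first moment of $\omega_i$, which is standard for Navier--Stokes vorticity evolving from a compactly supported $L^p$ datum.
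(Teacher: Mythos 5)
Your argument is circular, and the circularity cannot be patched by a local modification. In the paper, Theorem~\ref{th3.1} is a \emph{consequence} of Theorem~\ref{th3.2}: one writes $W_2(\omega_i/a_i,\delta_{Y_i}) \le W_2(\omega_i/a_i,\delta_{X_i}) + |X_i-Y_i| = W_i + |X_i-Y_i|$, bounds $W_i$ by Proposition~\ref{lem3.3}, and bounds $|X_i-Y_i|$ by the very theorem you are asked to prove. You instead invoke Theorem~\ref{th3.1} twice — once to conclude the position estimate from $|X_i-Y_i|\le W_2(\omega_i/a_i,\delta_{Y_i})$, and once to control $\frac{1}{|a_i|}\int|x-Y_i|\,|\omega_i|\,dx$ in the near-field part of the velocity estimate. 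Neither use is legitimate here. The only quantitative bound available at this stage is Proposition~\ref{lem3.3}, which controls $W_i = W_2(\omega_i/a_i,\delta_{X_i})$, i.e.\ the spread of each component about its \emph{own} center; this says nothing by itself about $|X_i-Y_i|$ (indeed, $W_i$ is minimal among $W_2(\omega_i/a_i,\delta_Y)$ over $Y$, so a small $W_i$ is perfectly compatible with $X_i$ drifting far from $Y_i$).

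The repair is to carry $|X_i-Y_i|$ along as an unknown: bound $\frac{1}{|a_i|}\int|x-Y_i|\,|\omega_i|\,dx \le W_i + |X_i-Y_i|$, use Proposition~\ref{lem3.3} for $W_i$, and from your kernel decomposition derive a \emph{differential} inequality of the type $\bigl|\tfrac{d}{dt}|X_i-Y_i|\bigr| \lesssim \sum_j |X_j-Y_j| + e^{\Lambda t}(\eps+\sqrt{\nu t})$, which closes by Gronwall against the initial data $|\bar X_i-\bar Y_i|\le\eps$. Only then do you obtain the position bound, and feeding it back gives the velocity bound — you cannot get the velocity bound directly without Gronwall. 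Everything else in your proposal is essentially aligned with the paper: the identity for $\frac{d}{dt}(X_i-Y_i)$, the cancellation of the self-interaction by oddness of $K$, and the Lipschitz estimate for $K$ away from the origin all match (the paper realizes the near/far split through $F_i=F_i^L+F_i^B$ and a cutoff $\psi$ at scale $d/12$ rather than through balls of radius $R$, but this is a matter of taste). One secondary slip: your far-field estimate uses $m_i(t,R)$, but the propagated decay bound available for $t\le T$ from \eqref{3.t1} is on $m_i(t,d/6)$, and since $R\le d/12$ one has $m_i(t,R)\ge m_i(t,d/6)$, so the far/near split must be made at scale $d/6$ (or $d/12$) rather than at $R$; the Chebyshev bound $m_i(t,R)\lesssim W_i^2/R^2$ is too weak to beat $\eps^{-\gamma}$.
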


We finally comment on our estimate on  time $T$ in Theorem \ref{th3.1}. In view of the exponential growth term in our concentration estimates, the statement of our theorems become meaningless if $T\gg \log\frac1{\eps+\sqrt{\nu}}$. In this regard, the bound on $T$ is satisfactory at least if the ``outer vorticity'' in \eqref{303} vanishes appropriately outside a decreasing ball around the vortex center, i.e., $\delta>0$ in \eqref{317}. The logarithmic bound ceases to hold if we fix a radius independently of $\eps$ and $\nu$, and it is not clear to us, how to overcome this restriction. Comparable logarithmic bounds on  time were obtained earlier by Cetrone and Serafini \cite{CetroneSerafini18} under stronger concentration assumptions. The exponential growth term results from a Gronwall argument. It is very likely that  better estimates on time would require a completely different (and genuinely nonlinear) approach.

	\section{Proofs}\label{Sec3}
	We remark that we treat $\eps$ and $\nu$ as two independent parameters and, since we are interested in what happens when both of them are not too large, in the computations we will always assume that $\eps<1$ and $\nu<1$. Moreover, $\eps$ will always be (sufficiently) smaller than the other length parameters $R$ and $d$, that is $\eps\ll R,\,d$, and we notice  that $R$ is chosen in \eqref{317} such that
		\begin{equation}
	\label{305}
R\le \frac{d}{12}.
	\end{equation}
	Before starting with the proofs, we introduce some notation.
	In the following, $C$ will always denote a positive constant independent of $t$, $\nu$ and $\varepsilon$, possibly depending on other quantities, and whose precise value may change from line to line.  When we write $A \lesssim B$
	we mean that $A \le CB$ for some $C$, and when we write $A \sim B$
	we mean that $A \lesssim B$ and $B \lesssim A$. Hence for example we will often neglect the vortex intensities $a_i$.
	
	We denote the velocity generated by the $i$-th vorticity as $u_i(t,x)=K*\omega_i(t,x)$, where the convolution is meant in space.
	Moreover, we define the ``external field'' $F_i$ acting on $\omega_i$ as the velocity field generated by all other vorticities, that is,
	$F_i = \sum_{j\neq i}u_j$. In this way, the total velocity $u=K*\omega$ satisfies
	\begin{equation*}
		u=\sum_{j=1}^{N}u_j=u_i+F_i.
	\end{equation*}
	The $2$-Wasserstein distance of the $i$-th component from its center will be indicated by
	\begin{equation*}
		W_i(t)=W_2 \left( \frac{\omega_i(t)}{a_i},\,\delta_{X_i(t)} \right) = \sqrt{\frac{1}{a_i}\int_{\R^2} |x-X_i(t)|^2 \omega_i(t,x)\,dx }.
	\end{equation*}

	In our first step, we derive a bound on the growth of the Wasserstein distance.

	\subsection{Proof of the estimate on $W_i$.}
	
	We fix $i\in\{1,\ldots,N\}$.
	It will be convenient to regularize the Biot--Savart kernel by cutting out the singularity at the origin. We do this with the help of 
	a radial cut-off function $\psi\in C^\infty_c(\mathbb{R}^2)$ satisfying
	\begin{equation}\label{3.10}
		0\le\psi\le 1, \quad \psi=1 \text{ on }B_{d/12}(0), \quad \psi=0 \text{ out of } B_{d/6}(0), \quad |\nabla\psi| \lesssim \frac{1}{d}. 
	\end{equation}
We then split the external field acting on $\omega_i$ into two parts, $F_i=F_i^L+F_i^B$, with $F_i^L$ being the regular part,
	\begin{equation*}
		F_i^L(t,x)=\sum_{j\neq i} \int_{\mathbb{R}^2}[1-\psi(x-y)]\,K(x-y)\,\omega_j(t,y)\,dy
	\end{equation*}
	and $F_i^B$ the remainder
	\begin{equation*}
		F_i^B(t,x)=\sum_{j\neq i} \int_{\mathbb{R}^2}\psi(x-y)\,K(x-y)\,\omega_j(t,y)\,dy.
	\end{equation*}
	In the next lemma, we show that $F_i^L$ is indeed a Lipschitz function while  $F_i^B$ is merely bounded.
	
	\begin{lemma}\label{lem3.1}
		$F_i^L$ is Lipschitz in space uniformly in time with Lipschitz constant $\sim 1/d^2$, and $F_i^B$ is bounded with 
		\begin{equation*}
			|F_i^B(t,x)| \lesssim \begin{cases} d^{\theta}\sum_{j\neq i} m_j(t,d/6)^{\theta} \, \|\omega_j(t)\|_{L^p}^{1-\theta} & \quad\text{for }x\in B_{d/6}(X_i),\\
				d^{\frac{p-2}p}\sum_{j\neq i} \|\omega_j(t)\|_{L^p} & \quad\text{for }x\in B_{d/6}(X_i)^c,
			\end{cases}
		\end{equation*}
		for any $t\ge0$ such that
		\begin{equation}\label{3.52}
			\min_{i\not=j} |X_i(t)-X_j(t)|\ge\frac{d}{2},
		\end{equation} 
		where $\theta= (p-2)/(3p-2)$.
	\end{lemma}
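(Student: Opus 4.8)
The plan is to isolate the two competing features of the Biot--Savart kernel by writing $F_i^L=\sum_{j\neq i}H*\omega_j$ and $F_i^B=\sum_{j\neq i}G*\omega_j$ with $H(z):=[1-\psi(z)]K(z)$ and $G(z):=\psi(z)K(z)$: the kernel $H$ is smooth with globally bounded gradient, whereas $G$ is supported in the small ball $B_{d/6}(0)$ and only locally integrable. Throughout I will use the elementary pointwise bounds $|K(z)|\lesssim 1/|z|$ and $|\nabla K(z)|\lesssim 1/|z|^2$, together with $\|\omega_j(t)\|_{L^1}=|a_j|\lesssim 1$.

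For the Lipschitz estimate on $F_i^L$, note that since $\psi\equiv 1$ on $B_{d/12}(0)$ the kernel $H$ vanishes there, so from $\nabla H=-\nabla\psi\otimes K+(1-\psi)\nabla K$ and the support properties \eqref{3.10} of $\psi$ I would deduce $\|\nabla H\|_{L^\infty}\lesssim 1/d^2$: the first term is supported on the annulus $\{d/12\le|z|\le d/6\}$, on which $|\nabla\psi|\lesssim 1/d$ and $|K|\lesssim 1/d$; the second is supported on $\{|z|\ge d/12\}$, on which $|\nabla K|\lesssim 1/d^2$. A mean value estimate then gives, for any $x_1,x_2$, that $|F_i^L(t,x_1)-F_i^L(t,x_2)|\le\|\nabla H\|_{L^\infty}|x_1-x_2|\sum_{j\neq i}\|\omega_j(t)\|_{L^1}\lesssim d^{-2}|x_1-x_2|$, uniformly in $t$.

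For the bound on $F_i^B$ when $x\in B_{d/6}(X_i)^c$ (which in fact holds for every $x$): since $G$ is supported in $B_{d/6}(0)$ and $0\le\psi\le1$, Hölder's inequality gives $|F_i^B(t,x)|\le\sum_{j\neq i}\|K(x-\cdot)\|_{L^{p'}(B_{d/6}(x))}\|\omega_j(t)\|_{L^p}$, and since $p>2$ implies $p'<2$ the kernel $1/|z|$ is $p'$-integrable at the origin, with $\|K(x-\cdot)\|_{L^{p'}(B_{d/6}(x))}\lesssim d^{2/p'-1}=d^{(p-2)/p}$; this is the second estimate.

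The core of the lemma is the bound for $x\in B_{d/6}(X_i)$, which is where assumption \eqref{3.52} enters. If $|x-X_i|\le d/6$ and $|x-y|\le d/6$ (so that $G(x-y)\ne0$), then $|y-X_j|\ge|X_i-X_j|-|X_i-x|-|x-y|\ge d/2-d/6-d/6=d/6$, so the $j$-th integral is effectively over $A_j:=B_{d/6}(x)\cap B_{d/6}(X_j)^c$, where $\|\omega_j(t)\|_{L^1(A_j)}\le|a_j|\,m_j(t,d/6)\lesssim m_j(t,d/6)$ while still $\|\omega_j(t)\|_{L^p(A_j)}\le\|\omega_j(t)\|_{L^p}$. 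I would then run Hölder with a free exponent $s$, $\int_{A_j}|K(x-y)||\omega_j(t,y)|\,dy\le\|K(x-\cdot)\|_{L^{s'}(B_{d/6}(x))}\|\omega_j(t)\|_{L^s(A_j)}$, interpolate $\|\omega_j(t)\|_{L^s(A_j)}\le\|\omega_j(t)\|_{L^1(A_j)}^{1-\lambda}\|\omega_j(t)\|_{L^p(A_j)}^{\lambda}$ with $1/s=(1-\lambda)+\lambda/p$, and choose $\lambda$ so that simultaneously $1-\lambda=\theta$ (to produce the factor $m_j^{\theta}\|\omega_j\|_{L^p}^{1-\theta}$) and $2/s'-1=\theta$ (to produce the factor $d^{\theta}$ from the kernel). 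The main obstacle is precisely this matching step: one must check that the unique $\lambda$ making the two $\omega_j$-norms appear with the right powers is also the one for which $s'<2$, i.e.\ for which the singular kernel $K$ remains $s'$-integrable on $B_{d/6}(x)$; solving the two identities forces $s=3-2/p$ (so $s'<2$ exactly because $p>2$) and $\theta=(p-2)/(3p-2)$, which is the claimed exponent. Everything else reduces to the pointwise estimates on $K$, $\nabla K$ and $\psi$ recorded above.
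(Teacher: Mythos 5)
Your proof is correct and takes essentially the same route as the paper: the Lipschitz bound on $F_i^L$ via the gradient of the truncated kernel, the global bound on $F_i^B$ via Hölder with exponents $(p,p')$, and the sharper local bound via the geometric observation that $y\in B_{d/6}(x)$ forces $y\in B_{d/6}(X_j)^c$ under \eqref{3.52}. The only cosmetic difference is in the final Hölder step, where the paper uses a three-factor generalized Hölder inequality on $|K|\,|\omega_j|^{\tilde\theta}|\omega_j|^{1-\tilde\theta}$ while you use a two-factor Hölder followed by Lebesgue interpolation of $\|\omega_j\|_{L^s}$ between $L^1$ and $L^p$; these are equivalent and produce the same exponents $s'=(3p-2)/(2p-2)$ and $\theta=(p-2)/(3p-2)$.
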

	\begin{proof}
		The Lipschitz bound on $F_i^L$ can be easily proven computing the gradient of the integrand,
		\begin{equation*}
			\begin{split}
				|\nabla\big[ \, \big(1-\psi(z) \big) \, K(z) \big]| &\le |K(z)|\,|\nabla\psi(z)| + |1-\psi(z)|\,|\nabla K(z)|  \lesssim \frac{1}{d^2},
			\end{split}
		\end{equation*}
		where we used the scaling of the Biot--Savart kernel in the form $|K(z)|\lesssim 1/|z|$, $|\grad K(z)|\lesssim 1/|z|^2$, and the properties of the cut-off function~\eqref{3.10}.
		
		We concentrate now on proving the boundedness of $F_i^B$. First, if $x\in B_{d/6}(X_i)$, we have for any $y\in B_{d/6}(x)$ and $j\neq i$ that $|y-X_j| \ge |X_j-X_i|-|X_i-x|-|x-y| \ge d/6$ because $|X_i-X_j|\ge d/2 $ by condition \eqref{3.52}. Therefore, for any $\tilde{\theta}\in(0,1)$, it holds that
		\begin{equation}\label{3.11}
			\begin{split}
				|F_i^B(x)| &\lesssim \sum_{j\neq i} \int_{B_{d/6}(x)\cap B_{d/6}(X_j)^c } \frac{1}{|x-y|}\,|\omega_j(y)|^{\tilde{\theta}}\,|\omega_j(y)|^{1-\tilde{\theta}}\,dy \\
				&\le \sum_{j\neq i} \left( \int_{B_{d/6}(x) } \frac{1}{|x-y|^q}\,dy \right)^{\frac{1}{q}}  \left( \int_{ B_{d/6}(X_j)^c } |\omega_j(y)| \,dy \right)^{\tilde{\theta}} \left( \int_{\R^2} |\omega_j(y)|^{p} \,dy \right)^{\frac{1-\tilde{\theta}}{p}},
			\end{split}
		\end{equation}
		where we have used the (generealized) H\"older inequality with exponent $q$  chosen so that $1/q + \tilde{\theta} + (1-\tilde{\theta})/{p} = 1$. Requiring $q<2$ ensures that the first integral in \eqref{3.11} is of the order $d^{(2-q)/q}$. Solving for $\tilde \theta$ gives
		\begin{equation*}
			\tilde{\theta} = \left( 1-\frac{1}{p}-\frac{1}{q} \right) \frac{p}{p-1},
		\end{equation*}
		and thus, the condition that $\tilde{\theta}$ is positive enforces $q \in \left( p/(p-1),2 \right)$. Choosing $q$ to be, say, the mean value between $p/(p-1)$ and $2$ yields $\tilde{\theta}=\theta$ as in the statement of the lemma. 
		
		Second, if $x \notin B_{d/6}(X_i)$, we use the rougher (in fact, globally valid) bound
		\begin{equation*}
		\begin{split}
			|F_i^B(x)| & \lesssim \sum_{j\neq i} \int_{B_{d/6}(x)} \frac{1}{|x-y|}\,|\omega_j(y)|\,dy \\
			&\lesssim \sum_{j\neq i}\left( \int_{B_{d/6}(0)} \frac{1}{|z|^{p'}}\,dz \right)^{\frac{1}{p'}} \|\omega_j\|_{L^p}
			\sim d^{1-\frac2p} \sum_{j\neq i}\|\omega_j\|_{L^p}.
			\end{split}
		\end{equation*}
		This concludes the proof.
	\end{proof}

In principle, we could estimate the $L^p$ norms of the vorticity components in the previous lemma with the help of the a priori estimate \eqref{3.4} and the assumption on the initial data \eqref{3.30}. For large times, however, it is more convenient to make use of the smoothing properties of the diffusive part in \eqref{3.5}. More precisely, we have the following estimate.

\begin{lemma}\label{lem3.1a}
For any $i=1,\dots,N$, $q\in [1,\infty]$ and any $t>0$, it holds that
	\begin{equation}\label{3.14}
		\|\omega_i(t)\|_{L^q} \lesssim \frac{1}{(\nu t)^{1-\frac{1}{q}}}\, \|\bar{\omega_i}\|_{L^1}.
	\end{equation}
\end{lemma}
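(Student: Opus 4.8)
The plan is to prove \eqref{3.14} by the classical parabolic smoothing argument of Nash, exploiting that the drift $u$ in \eqref{3.5} is divergence-free so that it drops out of the energy identities. Since each component $\omega_i$ has a definite sign I may assume $\omega_i\ge 0$; then the case $q=1$ is simply conservation of mass, $\|\omega_i(t)\|_{L^1}=|a_i|=\|\bar\omega_i\|_{L^1}=:M$. For $q>1$ I would test \eqref{3.5} against $q\,\omega_i^{q-1}$ and integrate by parts over $\R^2$ --- legitimate because for $t>0$ the solution is smooth and lies in $L^1\cap\Li(\R^2)$ with rapid decay. The transport term vanishes,
\[
\int_{\R^2}(u\cdot\grad\omega_i)\,\omega_i^{q-1}\,dx=\tfrac1q\int_{\R^2}u\cdot\grad\big(\omega_i^q\big)\,dx=-\tfrac1q\int_{\R^2}(\div u)\,\omega_i^q\,dx=0,
\]
leaving the identity $\tfrac{d}{dt}\|\omega_i(t)\|_{L^q}^q=-\tfrac{4\nu(q-1)}{q}\,\big\|\grad\big(\omega_i^{q/2}\big)\big\|_{L^2}^2$.

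Next I would invoke the two-dimensional Nash inequality $\|f\|_{L^2}^{2}\lesssim\|f\|_{L^1}\|\grad f\|_{L^2}$ with $f=\omega_i^{q/2}$, which gives $\big\|\grad(\omega_i^{q/2})\big\|_{L^2}^2\gtrsim\|\omega_i\|_{L^q}^{2q}/\|\omega_i\|_{L^{q/2}}^{q}$ and hence
\[
\frac{d}{dt}\|\omega_i(t)\|_{L^q}^q\;\lesssim\;-\,\frac{\nu}{q}\,\frac{\|\omega_i(t)\|_{L^q}^{2q}}{\|\omega_i(t)\|_{L^{q/2}}^{q}}.
\]
For $q=2$ the denominator is $\le M^2$, so with $y=\|\omega_i\|_{L^2}^2$ one has $(1/y)'\gtrsim\nu/M^2$, and integrating from $0$ gives $\|\omega_i(t)\|_{L^2}\lesssim M\,(\nu t)^{-1/2}$, i.e.\ \eqref{3.14} for $q=2$. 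I would then iterate along the dyadic scale $q=2^k$: assuming \eqref{3.14} for $q/2$, i.e.\ $\|\omega_i(t)\|_{L^{q/2}}\lesssim M\,(\nu t)^{-(1-2/q)}$, the inequality above becomes, for $z=\|\omega_i\|_{L^q}^q$,
\[
(1/z)'\;\gtrsim\;\frac{1}{q}\,\frac{\nu^{q-1}\,t^{q-2}}{M^{q}},
\]
whose integration from $0$ to $t$ (the power $t^{q-2}$ is integrable since $q\ge 2$) yields $z(t)\lesssim M^q(\nu t)^{-(q-1)}$, that is \eqref{3.14} at exponent $q$. A short computation shows that the multiplicative constant acquired when passing from level $q/2$ to level $q$ is of the form $(q/c_0)^{1/q}$ with $c_0$ the fixed Nash constant, so over the dyadic levels these constants form a convergent product and stay bounded as $q\to\infty$; letting $q\to\infty$ then gives \eqref{3.14} for $q=\infty$ as well, and the remaining exponents follow by log-convexity of the $L^q$ norms between consecutive dyadic powers (equivalently, interpolation between $L^1$ and $\Li$).

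The only mildly technical point is this bookkeeping of constants needed to reach $q=\infty$; the rest is routine, and there is no circularity since for $t>0$ the solution already belongs to every $L^q$. As an alternative one could instead cite the Gaussian upper bound $0\le G_\nu(t,x;s,y)\lesssim(\nu(t-s))^{-1}\exp\big(-c|x-y|^2/(\nu(t-s))\big)$ for the fundamental solution of $\partial_t+u\cdot\grad-\nu\laplace$ with a divergence-free drift (Nash--Aronson--Osada), write $\omega_i(t,x)=\int_{\R^2}G_\nu(t,x;0,y)\,\bar\omega_i(y)\,dy$, and bound $\|\omega_i(t)\|_{L^q}\le\big(\sup_y\|G_\nu(t,\cdot\,;0,y)\|_{L^q}\big)\|\bar\omega_i\|_{L^1}\lesssim(\nu t)^{-(1-1/q)}\|\bar\omega_i\|_{L^1}$; I would nonetheless prefer the self-contained Nash argument above in order to keep the paper elementary.
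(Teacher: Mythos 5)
Your proof is essentially the paper's: the same Nash-inequality energy argument with dyadic induction over $q=2^k$, followed by interpolation for intermediate $q$ and the limit $k\to\infty$ for $q=\infty$ (and the paper likewise credits this scheme, citing Feng--\v{S}ver\'ak). The one place you go beyond the paper is the careful bookkeeping that the constant picked up at each dyadic level is $(q/c_0)^{1/q}$ so the product over levels converges --- a point the paper glosses over but which is genuinely needed to pass to $q=\infty$; your alternative via Gaussian upper bounds \`a la Nash--Aronson--Osada is also a valid shortcut, though it outsources the work to a heavier black box.
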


This estimate is known to be true for solutions to the Navier--Stokes equation (see, e.g., Theorem 4.3 in \cite{GigaMiyakawaOsada88}), but due to possible cancellation effects, we cannot deduce its validity for individual vortex components. Instead, we provide a short proof of this estimate, in which we follow an argumentation given in \cite{FengSverak15}.

\begin{proof}
It is enough to consider the statement for $q=2^k$. The  statement for general $q<\infty$ follows by interpolation, while the statement for $q=\infty$ comes from taking the limit $k\to\infty$. The case $q=1=2^0$ is trivially true since then $\|\omega_i(t)\|_{L^1}=a_i$. We suppose that $k\ge 1$ or, equivalently, $q\ge 2$ from here on.

We consider $E_q(t):= \|\omega_i(t)\|_{L^q}^q$ and observe that it satisfies the identity 
\[
-\frac{d}{dt} E_q(t) = \frac{4\nu (q-1)}{q}\|\grad \omega_i^{q/2}\|_{L^2}^2\sim \nu \|\grad \omega_i^{q/2}\|_{L^2}^2
\]
under the evolution \eqref{3.5}. Notice that we use here the fact that $q\ge 2$, so that the dependency on $q$ in this estimate can indeed be neglected. Making use of the 2D Nash inequality $\|f\|_{L^2}^2\lesssim \|f\|_{L^1}\|\grad f\|_{L^2}$, the latter turns into the estimate
\begin{equation}\label{100}
\frac{d}{dt} E_q(t)^{-1} \gtrsim \nu E_{\frac{q}2}(t)^{-2}.
\end{equation}
It remains to apply an induction argument over $k$, and we suppose thus that the statement holds for $q/2$. Then   \eqref{100} implies
\[
\frac{d}{dt} E_q(t)^{-1} \gtrsim \nu (\nu t)^{q-2} \|\bar \omega_i\|_{L^1}^{-q},
\]
and an integration in time gives
\[
\|\omega_i(t)\|_{L^q}^{-q}  = E_q(t)^{-1} \ge E_q(t)^{-1} - E_q(0)^{-1} \gtrsim (\nu t)^{q-1} \|\bar \omega_i\|_{L^1}^{-q},
\]
from which we easily deduce the statement of the lemma.
\end{proof}

	In the next lemma we use the properties of $F_i^L$ and $F_i^B$ that we just showed to deduce a bound on the evolution of the Wasserstein distance between $\omega_i$ and $X_i$.
	
	\begin{lemma}\label{lem3.2}
		Let $\theta\in(0,1)$ be given as in  Lemma \ref{lem3.1}. 
		For any $t\ge 0$ with property \eqref{3.52}, it holds that
		\begin{equation}\label{3.13}
			\begin{split}
				\frac{d}{dt}W_i^2(t) &\lesssim d^{-2}W_i^2(t) + d^{\theta}\sum_{j\neq i} m_j(t,d/6)^{\theta} \, \|\omega_j(t)\|_{L^p}^{1-\theta}\, W_i(t) \\
				&\quad + d^{\frac{p-2}p} m_i(t,d/6)^{\frac{1}{2}} \, \sum_{j\neq i}\|\omega_j(t)\|_{L^p} \, W_i(t) + \nu.
			\end{split}
		\end{equation}
	\end{lemma}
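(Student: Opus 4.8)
\textbf{Proof plan for Lemma \ref{lem3.2}.}

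The plan is to differentiate the quantity $W_i^2(t) = \frac1{a_i}\int_{\R^2}|x-X_i(t)|^2\,\omega_i(t,x)\,dx$ in time using the evolution equation \eqref{3.5} and the definition of $X_i$. First I would compute $\dot X_i(t) = \frac1{a_i}\int x\,\partial_t\omega_i\,dx$; integrating by parts and using that $\omega_i$ vanishes at infinity and $u$ is divergence free, the diffusive term $\nu\Delta\omega_i$ contributes nothing (the integral of $\nu\Delta\omega_i$ against the linear function $x$ vanishes), so $\dot X_i = \frac1{a_i}\int u(t,x)\,\omega_i(t,x)\,dx$. Splitting $u = u_i + F_i$ and using the antisymmetry of the Biot--Savart kernel, the self-interaction term $\int u_i\,\omega_i\,dx$ vanishes, leaving $\dot X_i = \frac1{a_i}\int F_i\,\omega_i\,dx$. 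Then, differentiating $W_i^2$, I get two groups of terms: one from $\partial_t\omega_i$ and one from $\dot X_i$. The transport part $-u\cdot\grad\omega_i$ against $|x-X_i|^2$, after integration by parts, produces $\frac{2}{a_i}\int (x-X_i)\cdot u(t,x)\,\omega_i(t,x)\,dx$; combined with the $\dot X_i$ contribution $-\frac{2}{a_i}\int(x-X_i)\cdot\dot X_i\,\omega_i\,dx = -2(X_i-X_i)\cdot\dot X_i = 0$ after one notices $\int(x-X_i)\omega_i\,dx=0$ — more precisely the cross terms reorganize so that only $\frac{2}{a_i}\int(x-X_i)\cdot\big(u(t,x)-\dot X_i\big)\,\omega_i\,dx$ survives. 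Writing $u - \dot X_i = u_i + (F_i - \dot X_i)$ and again killing the self-interaction $\int (x-X_i)\cdot u_i\,\omega_i\,dx = 0$ by the kernel's symmetry (this is the classical Marchioro--Pulvirenti cancellation: $\int\int (x-y)\cdot K(x-y)\,\omega_i(x)\omega_i(y) = 0$ since $(x-y)\perp K(x-y)$), I am left with the external-field contribution $\frac{2}{a_i}\int(x-X_i)\cdot\big(F_i(t,x)-\dot X_i\big)\,\omega_i(t,x)\,dx$, plus the diffusive term. The diffusive term $\frac{\nu}{a_i}\int\Delta\omega_i\,|x-X_i|^2\,dx = \frac{\nu}{a_i}\int\omega_i\,\Delta(|x-X_i|^2)\,dx = \frac{4\nu}{a_i}\int\omega_i\,dx = 4\nu \sim \nu$, which accounts for the last term in \eqref{3.13}.

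Next I would handle the external-field term. Subtract a constant (here $\dot X_i = \frac1{a_i}\int F_i\,\omega_i$) and use that $\int(x-X_i)\omega_i\,dx = 0$; writing $F_i = F_i^L + F_i^B$, for the Lipschitz part I would estimate $\big|\frac{2}{a_i}\int(x-X_i)\cdot(F_i^L(t,x)-\bar F_i^L)\,\omega_i\,dx\big| \lesssim \mathrm{Lip}(F_i^L)\cdot\frac1{a_i}\int|x-X_i|^2\omega_i\,dx \lesssim d^{-2}W_i^2(t)$, using the Lipschitz constant $\sim 1/d^2$ from Lemma \ref{lem3.1}. For the bounded part $F_i^B$ I would use Cauchy--Schwarz: $\big|\frac{1}{a_i}\int(x-X_i)\cdot F_i^B\,\omega_i\,dx\big| \le \big(\frac1{a_i}\int|x-X_i|^2\omega_i\big)^{1/2}\big(\frac1{a_i}\int|F_i^B|^2\omega_i\big)^{1/2} = W_i(t)\cdot\big(\frac1{a_i}\int|F_i^B|^2\omega_i\big)^{1/2}$ (and similarly the constant $\dot X_i$ piece, which only helps). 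To estimate $\frac1{a_i}\int|F_i^B|^2\omega_i$, I would split the domain of integration into $B_{d/6}(X_i)$ and its complement and apply the two-case bound from Lemma \ref{lem3.1}: on $B_{d/6}(X_i)$, $|F_i^B|$ is controlled by $d^{\theta}\sum_{j\neq i}m_j(t,d/6)^\theta\|\omega_j\|_{L^p}^{1-\theta}$, contributing (after pulling out the sup and using $\frac1{a_i}\int\omega_i \le 1$) exactly the second term of \eqref{3.13}; on the complement, $|F_i^B|\lesssim d^{(p-2)/p}\sum_{j\neq i}\|\omega_j\|_{L^p}$, and the mass there is $\frac1{a_i}\int_{B_{d/6}(X_i)^c}\omega_i = m_i(t,d/6)$, so the contribution picks up an extra factor $m_i(t,d/6)^{1/2}$ and yields the third term of \eqref{3.13}. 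Summing the three contributions plus $\nu$ gives \eqref{3.13}.

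The main technical points to be careful about are: (i) justifying all the integrations by parts, which requires enough decay of $\omega_i$ at infinity — this is fine since $\omega_i$ is compactly supported initially and the heat-type evolution keeps it Schwartz-decaying; (ii) the two cancellation identities — the diffusive term not affecting $\dot X_i$, and the self-interaction term $\int(x-X_i)\cdot u_i\,\omega_i\,dx=0$ via the perpendicularity $z\cdot K(z)=0$, which is where condition \eqref{3.52} is implicitly invoked only insofar as it licenses the application of Lemma \ref{lem3.1}; and (iii) correctly tracking which term carries the factor $m_i(t,d/6)^{1/2}$ versus $m_j(t,d/6)^\theta$ — the former comes from the \emph{measure} of the bad set $B_{d/6}(X_i)^c$ for the center $X_i$ itself, the latter from the pointwise size of $F_i^B$ inside the good ball, which in turn depends on the tails $m_j$ of the \emph{other} components. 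I do not expect a serious obstacle; the argument is a weighted-second-moment computation in the spirit of Marchioro--Pulvirenti, with the Wasserstein quantity $W_i^2$ playing the role of the moment of inertia and the new ingredient being the careful $L^p$-plus-tail bookkeeping from Lemma \ref{lem3.1} together with the explicit $4\nu$ from the Laplacian.
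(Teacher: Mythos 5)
Your proposal is correct and follows essentially the same route as the paper: same time-differentiation of $W_i^2$ with the Marchioro--Pulvirenti cancellation $z\cdot K(z)=0$ killing the self-interaction, the same $4\nu$ from the Laplacian, the same split $F_i=F_i^L+F_i^B$ with Lemma~\ref{lem3.1}, and the same division of the $F_i^B$ contribution into $B_{d/6}(X_i)$ and its complement yielding the factors $m_j^\theta$ and $m_i^{1/2}$. The only cosmetic deviations are that the paper subtracts $F_i^L(X_i)$ rather than $\dot X_i$ (both are legitimate since $\int(x-X_i)\omega_i\,dx=0$) and splits the domain before, rather than after, applying Jensen/Cauchy--Schwarz.
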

	\begin{proof}
		Since time is fixed, we will often forget to write it in this proof.
		We observe first that, thanks to the assumption that the initial vorticity components are compactly supported, we know that $W_i(0)<\infty$. Because of the smoothness of $\omega_i$, the Wasserstein distance remains finite and differentiable for positive times.
		
		We consider the evolution of the squared Wasserstein distance, and compute, using the evolution  \eqref{3.5} of the vorticity components, multiple integrations by parts and the definition of the vorticity centers $X_i$,
		\begin{equation*}
			\begin{split}
				\frac{d}{dt}W_i^2 &= \frac{2}{a_i} \int_{\mathbb{R}^2} (x-X_i)\cdot u(x)\,\omega_i(x)\,dx + \frac{\nu}{a_i} \int_{\mathbb{R}^2} \Delta|x-X_i|^2 \,\omega_i(x)\,dx \\
				&\quad - \frac{2}{a_i} \int_{\mathbb{R}^2} (x-X_i)\, \omega_i(x)\,dx \cdot \frac{dX_i}{dt} \\
				&= \frac{2}{a_i} \iint_{\mathbb{R}^2\times \R^2} (x-X_i)\cdot K(x-y)\,\omega_i(x)\omega_i(y)\,dxdy \\
				&\quad+ \frac{2}{a_i} \int_{\mathbb{R}^2} (x-X_i)\cdot F_i(x)\,\omega_i(x)\,dx + 4\nu.
			\end{split}
		\end{equation*}
		The first integral on the right-hand side vanishes because $K$ is odd and because $z \cdot K(z) = 0$. Using the definition of $X_i$ again and decomposing $F_i=F_i^L+F_i^B$, we thus have
		\begin{align*}
			\frac{d}{dt}W_i^2(t) &= \frac{2}{a_i} \int_{\mathbb{R}^2} (x-X_i) \cdot \left(F_i^L(x)-F_i^L(X_i)\right) \omega_i(x)\,dx \\
			&\qquad + \frac{2}{a_i} \int_{\mathbb{R}^2} (x-X_i) \cdot F_i^B(x)\, \omega_i(x)\,dx + 4\nu.
		\end{align*}	
		The first integral is easy to estimate by $W_i^2/d^2$ because of the Lipschitz property of $F_i^L$.
		On the second integral we use the different bounds obtained for $F_i^B$. Splitting the integration domain into $B_{d/6}(X_i)$ and its complement, we have
		\begin{align*}
\MoveEqLeft
				\int_{\mathbb{R}^2} (x-X_i)
				 \cdot F_i^B(x)\, \omega_i(x)\,dx \\
				&\lesssim \|F_i^B\|_{L^\infty(B_{d/6}(X_i))} \int_{\mathbb{R}^2} |x-X_i|\,|\omega_i(x)|\,dx \\
				&\quad + \|F_i^B\|_{L^\infty(B_{d/6}(X_i)^c)} \left( \int_{B_{d/6}(X_i)^c} |\omega_i(x)|\,dx \right)^{\frac{1}{2}} \left( \int_{\mathbb{R}^2} |x-X_i|^2 \,|\omega_i(x)|\,dx \right)^{\frac{1}{2}} \\
				&\lesssim \|F_i^B\|_{L^\infty(B_{d/6}(X_i))} \, W_i +  \|F_i^B\|_{L^\infty(B_{d/6}(X_i)^c)} \left( \int_{B_{d/6}(X_i)^c} |\omega_i(x)|\,dx \right)^{\frac{1}{2}} \, W_i,
		\end{align*}
		where we used Jensen's and Hölder's inequalities.
		Observe that in the estimates above the assumption that the vortex components have definite sign was necessary to bound
		\begin{equation*}
			\int_{\mathbb{R}^2} |x-X_i|\,|\omega_i(x)|\,dx  \lesssim \left(\int_{\mathbb{R}^2} |x-X_i|^2 \,|\omega_i(x)|\,dx\right)^\frac{1}{2} \lesssim W_i.
		\end{equation*}
		It remains only to apply Lemma \ref{lem3.1}   and the proof is complete.
	\end{proof}

	Now we come to the objective of this subsection, that is a Gronwall estimate on the Wasserstein distance.
	Our argument will be the following. We start from the differential inequality \eqref{3.13}, and for small times we observe that the $L^p$ norm of the vorticity inherits the bound \eqref{3.30} from the initial datum. For larger times, the effect of the viscosity kicks in, and we use the decay estimate \eqref{3.14}. In both cases, in order to balance \eqref{3.30} or \eqref{3.14}, we need to make sure  that the portion of vorticity far from the centers is small enough.
	For this reason, we have to ensure not only that the vorticity centers remain sufficiently far from each other as in \eqref{3.52}, but also that most of each vorticity component remains concentrated around its center. We observe that, under the localization condition \eqref{3.2} and the defintion of $d $ in \eqref{305}, we know that 
	\begin{equation}\label{3.49}
		\min_{i\not=j}|\bar{X}_i-\bar{X}_j|\ge\frac34d ,
	\end{equation} 
	provided that $\eps\le d/8$, which we will suppose from here on. We select then $T=T(\eps,\nu,R)$ as the \emph{minimal} time for which at least one of the following equalities holds true:
	\[
	T = T_c
	\]
	\emph{or}
\[
 \max_i m_i(T,d/6)=\eps^\alpha+(\nu T)^{\frac{\alpha}{2}} ,
 \]
 \emph{or}
 \[
\max_i |X_i(T) - Y_i(T)| =\frac{d}{4} ,
 \]
	where $\alpha=\alpha(p,\gamma)< \beta$ is a positive parameter to be fixed later, cf.~\eqref{3.alpha}. Here we observe that, thanks to \eqref{3.2}, \eqref{3.49} and \eqref{303}, and because both the vortex centers $X_i$ and the outer vorticity $m_i$ are continuous in time,
	the time $T$ is always strictly positive, $T>0$, . We thus have
	\begin{equation}\label{3.t1}
	 m_i(t,d/6)\le\eps^\alpha+(\nu t)^{\frac{\alpha}{2}} \quad\text{and}\quad|X_i(t) - Y_i(t)| \le \frac{d}{4}
	\end{equation}
for all $t\le \min\{ T,T_c\}$ and any $i$. Moreover, using the triangle inequality and \eqref{305}, we also notice that the latter entails
\begin{equation}\label{311}
	|X_i(t)-X_j(t)|\ge\frac{d}{2}
\end{equation}
for all $t\le \min\{ T,T_c\}$ and any $i\not=j$.	We derive the estimate on the growth of $W_i$ for times smaller than $T$, while in Subsection \ref{subsec2.2} we will show that the time $T$ can be chosen independently of $\eps$ and~$\nu$.

	\begin{prop}\label{lem3.3}
		Suppose that  $\alpha$ satisfies
		\begin{equation}\label{3.alpha}
			\alpha \ge \frac1{\theta} +\left(\frac1{\theta}-1\right)\gamma  = \frac{3p-2}{p-2} + \frac{2p\gamma}{p-2},
		\end{equation}
		where $\theta$ was given as in Lemma \ref{lem3.1}. Then, for any $i=1,\ldots,N$ and $t\in[0,\min\{T,\nu^{-1}\}]$ there holds
		\begin{equation}\label{3.32}
			W_i^2(t) \lesssim  e^{Ct/d^2} (\varepsilon^2 + \nu t),
		\end{equation}
		for some positive constant $C>0$.
	\end{prop}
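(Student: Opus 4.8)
The plan is to combine the differential inequality from Lemma~\ref{lem3.2} with Gr\"onwall's lemma, after first controlling the $L^p$ norms and the far-field masses that appear on the right-hand side of \eqref{3.13}. Fix $i$ and work on $[0,\min\{T,\nu^{-1}\}]$, where by definition of $T$ the separation condition \eqref{311} holds, so Lemma~\ref{lem3.2} is applicable, and moreover $m_j(t,d/6)\le\eps^\alpha+(\nu t)^{\alpha/2}$ for all $j$ by \eqref{3.t1}.

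First I would bound the $L^p$ norms uniformly: from \eqref{3.4} and \eqref{3.30} one has $\|\omega_j(t)\|_{L^p}\le\eps^{-\gamma}$, and for the regime where $\nu t$ is not small, Lemma~\ref{lem3.1a} with $q=p$ gives $\|\omega_j(t)\|_{L^p}\lesssim(\nu t)^{-(1-1/p)}$. The key point is that when $\nu t\le\eps^2$ the first bound is the better one and we use $\|\omega_j\|_{L^p}\lesssim\eps^{-\gamma}$, whereas when $\nu t\ge\eps^2$ we use the decay estimate; in either case I claim that the product $m_j(t,d/6)^\theta\|\omega_j(t)\|_{L^p}^{1-\theta}$ that appears in \eqref{3.13} is $\lesssim\eps^2+\nu t$ (in fact much smaller), precisely because the choice of $\alpha$ in \eqref{3.alpha} is rigged so that $\alpha\theta-(1-\theta)\gamma\ge\text{(something)}$ makes $(\eps^\alpha)^\theta(\eps^{-\gamma})^{1-\theta}\lesssim\eps^2$, and similarly $((\nu t)^{\alpha/2})^\theta((\nu t)^{-(1-1/p)})^{1-\theta}\lesssim\nu t$ using $\nu t\le 1$. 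The third term on the right of \eqref{3.13}, involving $m_i(t,d/6)^{1/2}$ times $\sum_{j\ne i}\|\omega_j\|_{L^p}$, is handled the same way (here only a square root of the small mass is available, but $\alpha$ is large enough that $\alpha/2$ still beats $\gamma$, and the decay estimate kills the $(\nu t)^{-(1-1/p)}$ factor).

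With these bounds in hand, \eqref{3.13} becomes, after applying Young's inequality $ab\le\tfrac12 a^2+\tfrac12 b^2$ to split off the $W_i(t)$ factors,
\begin{equation*}
\frac{d}{dt}W_i^2(t)\lesssim \frac{1}{d^2}W_i^2(t) + \bigl(\eps^2+\nu t\bigr) + \nu \lesssim \frac{1}{d^2}W_i^2(t)+\eps^2+\nu t,
\end{equation*}
where the stray $\nu$ was absorbed into $\nu t$ for $t$ bounded below (or simply kept, noting $\nu\le\nu t\cdot t^{-1}$ needs care — more cleanly, one integrates and the $\nu$ contributes $\nu t$, still of the claimed order). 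Then Gr\"onwall's inequality in the form $W_i^2(t)\le e^{Ct/d^2}(W_i^2(0)+\int_0^t(\eps^2+\nu s)\,ds)\le e^{Ct/d^2}(W_i^2(0)+\eps^2 t+\tfrac12\nu t^2)$, together with the initial bound $W_i(0)\le W_2(\bar\omega_i/a_i,\delta_{\bar Y_i})\le\eps$ from \eqref{3.6} and \eqref{3.2}, yields $W_i^2(t)\lesssim e^{Ct/d^2}(\eps^2+\eps^2 t+\nu t^2)$. Since we are on $[0,\nu^{-1}]$ we have $\nu t^2\le t$, and absorbing the polynomial factors $t$ into the exponential $e^{Ct/d^2}$ (at the cost of enlarging $C$) gives exactly \eqref{3.32}.

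The main obstacle is the bookkeeping in the second paragraph: one must verify that the single parameter $\alpha$ chosen in \eqref{3.alpha} simultaneously makes \emph{all} the nonlinear far-field terms in \eqref{3.13} subordinate to $\eps^2+\nu t$, handling correctly the two regimes $\nu t\lessgtr\eps^2$ for the $L^p$ bound and keeping track of the different powers $\theta$ versus $1/2$ with which $m_i$ enters the two remainder terms. This is where the interpolation constraint $\gamma\ge 2-2/p$ and the precise form of $\theta=(p-2)/(3p-2)$ have to be used; everything else (the integration by parts producing \eqref{3.13}, Young's inequality, Gr\"onwall) is routine.
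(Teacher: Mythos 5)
Your approach is essentially the paper's: plug the mass bounds \eqref{3.t1} and the $L^p$ bounds (using \eqref{3.4}--\eqref{3.30} when $\nu t\le\eps^2$, and the heat-kernel bound of Lemma~\ref{lem3.1a} when $\nu t\ge\eps^2$) into the differential inequality \eqref{3.13}, then Young and Gr\"onwall; the paper simply runs Gr\"onwall in two stages (on $[0,\eps^2/\nu]$ and then on $[\eps^2/\nu,t]$ using the first-stage output as initial datum), whereas you do it in one shot with the merged coefficient, which is an equivalent and if anything cleaner bookkeeping.

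One claim in your middle paragraph should be tightened. You assert that \eqref{3.alpha} makes the coefficient of $W_i$, namely $m_j^{\theta}\|\omega_j\|_{L^p}^{1-\theta}$, bounded by $\eps^2+\nu t$. That is stronger than what \eqref{3.alpha} gives: from $\alpha\theta\ge1+(1-\theta)\gamma$ one only obtains $\eps^{\alpha\theta-(1-\theta)\gamma}\le\eps$, and using $\gamma\ge 2-2/p$ in the viscous regime only makes the $\nu t$ exponent at least $1/2$, so the correct bound is $\lesssim\eps+\sqrt{\nu t}$. This is, however, exactly what you need: Young's inequality applied to $(\eps+\sqrt{\nu t})W_i$ produces $\tfrac12 W_i^2+\tfrac12(\eps^2+\nu t)$, which is the display you wrote. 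If instead you ran Young's on your stated coefficient $\eps^2+\nu t$ you would get $(\eps^2+\nu t)^2$; this happens to still be $\le\eps^2+\nu t$ on the relevant range (since $\eps<1$ and $t\le\nu^{-1}$), so the argument does not break, but as written the intermediate claim is both unprovable from \eqref{3.alpha} and unnecessary. Replace it by the honest bound $\lesssim\eps+\sqrt{\nu t}$ and the rest goes through unchanged.
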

	\begin{proof}
		Using conditions \eqref{3.t1}, \eqref{305} and assumption \eqref{3.30} together with \eqref{3.4} in estimate \eqref{3.13}, we obtain, in the case $\sqrt{\nu t}\le\eps$,
		\begin{equation*}
			\frac{d}{dt}W_i^2(t) \lesssim d^{-2}W_i^2(t) + [d^{\theta}\varepsilon^{\theta\alpha - \gamma(1-\theta)} + d^{\frac{p-2}p}\varepsilon^{\frac{\alpha}{2}-\gamma} ] W_i(t) + \nu.
		\end{equation*}
		It is sufficient to assume that $\alpha$ is large enough, so that all the exponents of $\eps$ are larger or equal to $1$. This condition is guaranteed by \eqref{3.alpha} and hence, being that $\varepsilon < 1$, $R\lesssim 1$ and $\theta<\frac{p-2}p$, we have
		\begin{equation}\label{3.15}
			\frac{d}{dt}W_i^2(t) \lesssim d^{-2} W_i^2(t) + d^{\theta} \varepsilon W_i(t) + \nu\lesssim d^{-2} W_i^2(t) + d^{2\theta+2} \eps^2 +\nu,
		\end{equation}
		for these times. 
		There exists thus a constant $C>0$  such that
		\[
		\frac{d}{dt}\left( e^{-Ct/d^2}W_i^2(t)\right) \le Ce^{-Ct/d^2}\left(d^{2\theta+2}\eps^2+\nu\right),
		\]
		and an integration in time and using $d \sim 1$ yields
		\begin{equation}\label{200}
		W_i^2(t) \le e^{Ct/d^2} W_i^2(0) + \left( e^{Ct/d^2}-1\right)\left(d^{2\theta+4}\eps^2 +d^2 \nu\right) \lesssim e^{Ct/d^2} \left(\eps^2 +t \nu\right),
		\end{equation}
		for any $t\le \eps^2/\nu$, where we applied  assumption \eqref{3.2} and inequality \eqref{3.6}.
		
		For larger times, $\sqrt{\nu t}\ge\eps$, together with conditions \eqref{3.t1} and \eqref{305}, we use estimate \eqref{3.14} in the differential inequality \eqref{3.13}, and get
		\begin{equation*}
			\frac{d}{dt} W_i^2 \lesssim d^{-2} W_i^2 + d^{\theta}(\nu t)^{\frac{\alpha \theta}{2} - (1-\theta)\left(1-\frac1p\right)} W_i + d^{\frac{p-2}p}(\nu t)^{\frac{\alpha}4 -\left(1-\frac1p\right)} W_i +\nu.
		\end{equation*}
		Using condition \eqref{3.alpha} on $\alpha$ and the lower bound $\gamma\ge 2-2/p$  ensures that all the exponents of $\nu t$ are larger than $1/2$. Then thanks to the facts that $\nu t\le 1$ and $d\sim1$, we obtain the inequality 
		\begin{equation}\label{3.16}
			\frac{d}{dt}W_i^2(t) \lesssim d^{-2} W_i^2(t) + d^{\theta} \sqrt{\nu t} W_i(t) + \nu \lesssim d^{-2} W_i^2(t) + (d^{2\theta+2}t+1)\nu
		\end{equation}
		for these times. Similarly as above, we rewrite this differential inequality as
		\[
		\frac{d}{dt}\left(e^{-Ct/d^2}W_i^2(t)\right) \le C e^{-Ct/d^2}(d^{2\theta+2}t+1)\nu,
		\]
		with a (possibly larger) constant $C>0$. Integration in time over $t\ge \eps^2/\nu$ and using $d\sim 1 $ once more then yields
		\[
		W_i^2(t) \le e^{C(t-\eps^2/\nu)/d^2} W_i^2(\eps^2/\nu) +e^{C t/d^2} (\eps^2 +t\nu) \lesssim e^{Ct/d^2} \left(\eps^2 +t\nu\right),
		\]
		 where we have used the previous bound \eqref{200}. Combining both, we obtain our thesis. 
	\end{proof}
	
	From now on, we choose $\alpha$ such that \eqref{3.alpha} holds.
	Moreover, in what follows, we will write $\Lambda = C/d^2$ in the exponenential growth factor. 
	
	\subsection{Derivation of the main estimates}
So far, we have derived a bound on the spreading rate of the single components in terms of the second moment function $W_i(t)$, that is,  		
the Wasserstein distance between the vorticity components and their centers. However, we would like to express it in relation to the point-vortices solving \eqref{3.pvs}. Hence, in our next step we attempt to estimate the distance $|X_i-Y_i|$ between the vorticity centers and the point-vortices. 

	\begin{proof}[Proof of Theorem \ref{th3.2}]
		The proof is a simple adaptation of an argument in the proof of Theorem 2.2 from \cite{CetroneSerafini18}. Up to considering $\omega_j/a_j$ instead of $\omega_j$, we assume without loss of generality that $\omega_j$ is nonnegative and that $a_j=1$ for all $j$. Moreover, we often forget about the $t$'s.
		We compute
		\begin{equation*}
			\begin{split}
				\frac{dX_i}{dt}-\frac{dY_i}{dt} &= \sum_{j\neq i} \left[ \int_{\mathbb{R}^2}\int_{\mathbb{R}^2} K(x-y)\,\omega_i(x)\,\omega_j(y)\,dy\,dx - K(Y_i-Y_j) \right] \\
				&= A_1 + A_2 + A_3 + A_4,
			\end{split}
		\end{equation*}
		where
		\begin{align*}
			A_1 &=  \int_{\mathbb{R}^2} F_i^L(x) \omega_i(x)\,dx
			- F_i^L(Y_i),\\
			A_2 &= \int_{\mathbb{R}^2} F_i^B(x) \omega_i(x)\,dx,\\
			A_3 &= \sum_{j\neq i} \int_{\mathbb{R}^2} [K(Y_i-y)-K(Y_i-Y_j)](1-\psi(Y_i-y)) \,\omega_j(y)\,dy,\\
			A_4 & = -\sum_{j\neq i} \int_{\mathbb{R}^2} K(Y_i-Y_j)\,\psi(Y_i-y)\,\omega_j(y)\,dy,
		\end{align*}
		with  $\psi$ the same cutoff function that has been defined in \eqref{3.10}. Let us consider $A_1$ first. Recalling Proposition \ref{lem3.3},
		\begin{align*}
				A_1 &= \int_{\mathbb{R}^2} [F_i^L(x)-F_i^L(Y_i)]\,\omega_i(x)\,dx \\
				&\lesssim \frac1{d^2} \int_{\mathbb{R}^2} |x-X_i|\,\omega_i(x)\,dx + \frac1{d^2}|X_i-Y_i|
				\lesssim e^{\Lambda t} (\varepsilon+\sqrt{\nu t}) + |X_i-Y_i|,
		\end{align*}
		because $d\sim 1$. Concerning $A_2$, splitting the integration domain in $B_{d/6}(X_i)$ and its complement and using \eqref{305}, by Lemma \ref{lem3.1} we have
		\begin{equation*}
			\begin{split}
				|A_2| & \lesssim \|F_i^B\|_{L^\infty(B_{d/6}(X_i))} + \|F_i^B\|_{L^\infty(B_{d/6}(X_i))^c} \, m_i(t,d/6) \\
				&\lesssim   \sum_{j\neq i}m_j(t,d/6)^{\theta}\|\omega_j\|_{L^p}^{1-\theta} +  \sum_{j\neq i}\|\omega_j\|_{L^p}\,m_i(t,d/6). 
			\end{split}
		\end{equation*}
		We proceed similarly to the proof of Proposition \ref{lem3.3}.
In the case that $\sqrt{\nu t}<\eps$, we use \eqref{3.4}, \eqref{3.30}, \eqref{3.t1}, as well as condition \eqref{3.alpha} on $\alpha$ to obtain $|A_2|\lesssim \eps$ for these times. If $\sqrt{\nu t}\ge\eps$, we use estimates $\eqref{3.14}$, \eqref{3.t1} and condition \eqref{3.alpha} to obtain $|A_2|\lesssim \sqrt{\nu t}$ also in this case.
		We consider now $A_3$. 
		Because of the cutoff function $1-\psi$, we may restrict the integral to $|y-Y_i|>d/12$; this  ensures that $|K(Y_i-y) - K(Y_i-Y_j)|\lesssim |Y_j-y|/d^2 $ on the domain of integration. Hence we can bound 
		\begin{align*}
			|A_{3}| & \lesssim  \frac1{d^2} \sum_{j\neq i} \int_{\R^2} |y-Y_j|\, \omega_j(y)\, dy \lesssim e^{\Lambda t} (\varepsilon+\sqrt{\nu t}) + \sum_{j\neq i} |X_j-Y_j|
		\end{align*}
	by Proposition \ref{lem3.3}.
		We turn now to $A_4$. Since $|Y_i-Y_j|\ge d\gtrsim1$, the kernel $K(Y_i-Y_j)$ is bounded by $1$, and thus $|A_4| \lesssim \sum_{j\neq i}   \int_{B_{d/6}(Y_i)} \,\omega_j(y)\,dy$. We observe that on the domain of integration there holds
		\begin{equation*}
			|y-X_j| \ge |Y_j-Y_i|-|Y_i-y|-|Y_j-X_j| \ge \frac7{12}d,
		\end{equation*}
		again thanks to \eqref{3.t1}. Then $|A_4| \lesssim  \sum_{j\neq i} \int_{B_{d/6}(X_j)^c} \omega_j(y)\,dy = \sum_{j\neq i} m_j(t,d/6)$,  and again we use condition \eqref{3.t1} to bound $m_j$ with $\varepsilon$ or $\sqrt{\nu t}$ respectively in the case $\sqrt{\nu t}<\eps$ and $\sqrt{\nu t}\ge\eps$, keeping in mind that $\alpha>1$ by \eqref{3.alpha}.
		In conclusion, all the estimates obtained yield
		\begin{equation*}
			\left| \frac{d}{dt}|X_i-Y_i| \right| \le \left| \frac{dX_i}{dt}-\frac{dY_i}{dt} \right| \lesssim  \sum_j |X_j-Y_j| + e^{\Lambda t}(\varepsilon+\sqrt{\nu t})
		\end{equation*}
		for all $t\le T$. Summing over $i=1,\ldots,N$ and using a Gronwall argument, together with the initial condition that $|\bar{X}_i-\bar{Y}_i|\le\varepsilon$, yields the result. 
	\end{proof}

	\begin{proof}[Proof of Theorem \ref{th3.1}]
		This is an easy consequence of Proposition \ref{lem3.3} and Theorem \ref{th3.2} thanks to the triangle inequality.
	\end{proof} 
	
	\begin{proof}[Proof of Corollary \ref{cor3.1}]
		From the properties of the Wasserstein distance $W_1$ there follows
		\begin{equation*}
			\begin{split}
				W_1\left( \omega(t), \sum_i a_i \delta_{Y_i(t)} \right) \le \sum_i W_1(\omega_i(t), a_i\delta_{Y_i(t)}) = \sum_i |a_i| \int_{\mathbb{R}^2} |x-Y_i(t)|\, \frac{\omega_i(t,x)}{a_i} \,dx.
			\end{split}
		\end{equation*}
		The thesis is then a consequence of Jensen's inequality and Theorem \ref{th3.1}.
	\end{proof}

	\subsection{Proof that $T\ge \min\{T_c,c\log\frac1{\eps+\sqrt{\nu}}\}$.}\label{subsec2.2}

	In the following proofs we assume without loss of generality that $T\le T_c$ because otherwise there is nothing left to prove.
We distinguish two cases.
	
	\medskip
	
	In the \emph{first case}, we suppose that
		\begin{equation}\label{310}
		|X_i(T)-Y_i(T)| =  \frac{d}4
	\end{equation}
	for some $i$.

	\begin{lemma}\label{lem10}
		Suppose that \eqref{310} holds. Then $T\gtrsim \log\frac1{\eps + \sqrt{\nu}}$.
	\end{lemma}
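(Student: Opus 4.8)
The plan is to show that the time $T$ at which the vortex center has drifted a distance $d/4$ from its point vortex cannot be too small, by using the differential inequality for $|X_i-Y_i|$ established in the proof of Theorem \ref{th3.2}. Recall that in that proof we obtained, for all $t\le T$,
\[
\left|\frac{d}{dt}|X_i(t)-Y_i(t)|\right| \lesssim \sum_j |X_j(t)-Y_j(t)| + e^{\Lambda t}(\eps+\sqrt{\nu t}).
\]
Summing over $i$ and setting $g(t) = \sum_i |X_i(t)-Y_i(t)|$, this reads $\frac{d}{dt}g(t) \lesssim g(t) + e^{\Lambda t}(\eps+\sqrt{\nu t})$ with $g(0)\le N\eps$ by \eqref{3.2}. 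A Gronwall argument then gives $g(t) \lesssim e^{Ct}(\eps+\sqrt{\nu t})$ for $t\le T$; since $\sqrt{\nu t}\le \sqrt{\nu}\,e^{t}$ for $\nu<1$, we may absorb the time dependence into the exponential and write $g(t)\le C_0 e^{C_0 t}(\eps+\sqrt{\nu})$ for all $t\le T$ and some constant $C_0$ independent of $\eps$, $\nu$.

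First I would evaluate this bound at $t=T$. On the one hand, assumption \eqref{310} forces $g(T)\ge |X_i(T)-Y_i(T)| = d/4$. On the other hand, the Gronwall estimate gives $g(T)\le C_0 e^{C_0 T}(\eps+\sqrt{\nu})$. Combining the two inequalities,
\[
\frac{d}{4} \le C_0 e^{C_0 T}(\eps+\sqrt{\nu}),
\]
and solving for $T$ yields
\[
T \ge \frac{1}{C_0}\log\!\left(\frac{d}{4C_0(\eps+\sqrt{\nu})}\right) \gtrsim \log\frac{1}{\eps+\sqrt{\nu}},
\]
where in the last step I use that $d$ is a fixed constant and that, for $\eps+\sqrt{\nu}$ sufficiently small (which we may assume since $\eps<\eps_0$, $\nu<\nu_0$), $\log\frac{d}{4C_0(\eps+\sqrt{\nu})} \ge \frac12\log\frac{1}{\eps+\sqrt{\nu}}$.

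The only subtlety I expect is bookkeeping about which estimates from the proof of Theorem \ref{th3.2} are genuinely available here: that proof was carried out for $t\le T$ using precisely the consequences \eqref{3.t1} and \eqref{311} of the definition of $T$, so the differential inequality and the resulting Gronwall bound do hold on all of $[0,T]$, and there is no circularity — we are not assuming the conclusion $T\ge c\log\frac1{\eps+\sqrt\nu}$, only using the (unconditional) growth estimate on $g$. The one genuinely technical point is the replacement of $\sqrt{\nu t}$ by $\sqrt{\nu}\,e^t$, which is elementary but should be stated, since it is what lets us extract a clean exponential in $T$; alternatively one could note $\sqrt{\nu T}\le \sqrt{\nu}+\nu T/2$ and absorb the linear term differently, but the exponential bound is cleanest. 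This is the easy case; the case $\max_i m_i(T,d/6) = \eps^\alpha+(\nu T)^{\alpha/2}$ will require the more delicate analysis of the outer vorticity carried out in the subsequent lemma.
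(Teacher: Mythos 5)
Your argument is essentially identical to the paper's: the paper simply cites the conclusion of Theorem~\ref{th3.2} directly in the form $d \lesssim |X_i(T)-Y_i(T)| \lesssim e^{2\Lambda T}(\eps+\sqrt{\nu})$ (the factor $\sqrt{t}$ having been absorbed into the exponential exactly as you do) and takes logarithms. Re-deriving the Gronwall estimate from the differential inequality rather than invoking the theorem's statement is a harmless expansion, not a different route.
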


	\begin{proof}
	This is any easy consequence of the first estimate in Theorem \ref{th3.2}. Indeed, by the virtue of \eqref{310}, it holds that
\[
 d\lesssim |X_i(T)-Y_i(T)| \lesssim e^{2\Lambda T}(\eps+\sqrt{\nu}),
\]
which yields the statement of the lemma upon taking the logarithm and choosing $\eps+{\sqrt{\nu}}$ sufficiently small.
	\end{proof}
	
	\medskip

	\medskip
	
	In the \emph{second case}, we assume that for some $i\in\{1,\ldots,N\}$
	\begin{equation}\label{3.45}
		m_i(T,d/6) = \eps^\alpha+(\nu T)^{\frac{\alpha}{2}}.
	\end{equation}
	We endeavour to prove that $T$ is bounded uniformly from below in $\eps$ and $\nu$ also in this case. In order to do this, we derive an estimate on the outer vorticity portion $m_i(T,d/6)$ by employing an iteration estimate  similar to the one used in \cite{CeciSeis21b}. This in turn is based on the iterative procedure developed by Marchioro and co-workers, see for example \cite{Marchioro98_2, CapriniMarchioro15} but, differently from these works, we start from the assumption \eqref{303} that the initial vortex components are quickly decaying outside of balls of  radius $R$, while \cite{Marchioro98_2, CapriniMarchioro15} make use to a great extent of their hypothesis that this radius is of the order of $\eps$. 
	
	To estimate $m_i(T,d/6)$, we introduce the \enquote{smoothened outer vorticity portion} $\mu_i(t,\rho,r)$ defined in the following. We consider a smooth radially symmetric cut-off function $\eta=\eta_{\rho,r}$ such that
	\begin{equation*}
		\eta(x)=1 \text{ if } |x|\le\rho, \quad \eta(x)=0 \text{ if } |x|>\rho+r,
	\end{equation*}
	with
	\begin{equation}\label{3.36}
		|\nabla\eta| \lesssim \frac{1}{r}, \quad |\nabla^2\eta|\lesssim\frac{1}{r^2}.
	\end{equation}
	The $i$-th smoothened outer vorticity portion is defined as
	\begin{equation*}
		\mu_i(t,\rho,r) = \frac{1}{a_i} \int_{\R^2} \big( 1-\eta_{\rho,r}(x-X_i(t)) \big) \omega_i(t,x)\,dx.
	\end{equation*}
	It is easy to check that the functions $m_i$ and $\mu_i$ satisfy the relation
	\begin{equation}\label{3.51}
		m_i(t,\rho+r) \le \mu_i(t,\rho,r) \le m_i(t,\rho).
	\end{equation}
	We show a differential inequality for $\mu_i$ by making use of the fact that, thanks to Proposition \ref{lem3.3},
	\begin{equation}\label{3.35}
		m_i(t,\rho) \le \frac{1}{\rho^2} W_i(t)^2 \lesssim \frac{ e^{\Lambda t}}{\rho^2}(\eps^2+\nu t) \lesssim \frac{e^{\Lambda t}}{\rho^2}(\eps^2+\nu t)
	\end{equation}
	holds for all $t\le T\le 1$.
	
	\begin{lemma}\label{lem3.6}
		Let $i\in\{1,\ldots,N\}$ be fixed. For any $t\le T$, $r>0$ and $\rho\in(R,d/6-r)$ it holds that
		\begin{equation}\label{3.38}
			\mu_i(t,\rho,r) \le \mu_i(0,\rho,r) + \kappa(\rho,r) \int_{0}^{t} \mu_i(s,\rho-r,r)\,ds,
		\end{equation}
		where
		\begin{equation}\label{3.39}
			\kappa(\rho,r) \lesssim \frac{e^{2\Lambda T}}{\rho^2}  \left(\frac{\eps+\sqrt{\nu}}r + \frac{\eps^2+\nu}{r^2}\right)+\frac{\rho}r
		\end{equation}
	\end{lemma}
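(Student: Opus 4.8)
The plan is to differentiate $\mu_i$ in time, insert the evolution equation \eqref{3.5}, and integrate by parts. Since $\nabla\eta_{\rho,r}$ is supported in the annulus $\{\rho\le|x-X_i(t)|\le\rho+r\}$ with $|\nabla\eta_{\rho,r}|\lesssim 1/r$, every term that appears will carry a factor $\frac1{a_i}\int_{\{|x-X_i|\ge\rho\}}\omega_i\,dx\le\mu_i(t,\rho-r,r)$, cf.\ \eqref{3.51}; we may (and do) assume $r\le\rho$ so that this quantity is meaningful. Write $h(x):=\nabla\eta_{\rho,r}(x-X_i(t))$. Using $\div u=0$ and $\frac{d X_i}{dt}=\frac1{a_i}\int F_i\omega_i\,dx$ — the self-advection of the centre dropping out by the antisymmetry of $K$ — one finds
\[
\frac{d}{dt}\mu_i(t,\rho,r)\;=\;-I\;+\;II\;-\;\frac{\nu}{a_i}\int\Delta\eta_{\rho,r}(x-X_i)\,\omega_i(x)\,dx,
\]
where $I:=\frac1{a_i}\int h(x)\cdot u_i(x)\,\omega_i(x)\,dx$ is the self-interaction term and $II:=\frac1{a_i}\int h(x)\cdot\big(\frac1{a_i}\int(F_i(y)-F_i(x))\,\omega_i(y)\,dy\big)\,\omega_i(x)\,dx$ the action of the external field. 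The diffusive term is bounded at once by $\frac{\nu}{r^2}\mu_i(t,\rho-r,r)$ via \eqref{3.36}, which is admissible since $\rho\lesssim 1$.

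For $II$ I split $F_i=F_i^L+F_i^B$. The Lipschitz bound of Lemma \ref{lem3.1} gives $|F_i^L(y)-F_i^L(x)|\lesssim d^{-2}|x-y|\lesssim d^{-2}(\rho+|y-X_i|)$ for $x$ in the annulus, and, after using $\frac1{a_i}\int|y-X_i|\,\omega_i\,dy\lesssim W_i(t)$ together with Proposition \ref{lem3.3} (so that $W_i(t)\lesssim e^{\Lambda T}(\varepsilon+\sqrt{\nu})$ for $t\le T\le1$), the $F_i^L$-part of $II$ is $\lesssim\big(\tfrac{\rho}{r}+\tfrac{e^{\Lambda T}(\varepsilon+\sqrt{\nu})}{r}\big)\mu_i(t,\rho-r,r)$; this is where the $\rho/r$ term in \eqref{3.39} comes from. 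For $F_i^B$ I use the two estimates of Lemma \ref{lem3.1}, splitting the $y$-integration at $\partial B_{d/6}(X_i)$, and then repeat the bookkeeping of the proof of Proposition \ref{lem3.3}: by \eqref{3.t1}, the a priori $L^p$ bounds \eqref{3.4}, \eqref{3.30}, the decay estimate \eqref{3.14}, and the choice \eqref{3.alpha} of $\alpha$, one has $\sum_{j\neq i}m_j(t,d/6)^{\theta}\|\omega_j(t)\|_{L^p}^{1-\theta}+\sum_{j\neq i}\|\omega_j(t)\|_{L^p}\,m_i(t,d/6)\lesssim\varepsilon+\sqrt{\nu}$ for $t\le T\le1$, so that the $F_i^B$-part of $II$ is $\lesssim\tfrac{\varepsilon+\sqrt{\nu}}{r}\mu_i(t,\rho-r,r)$. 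Using $\rho\lesssim1\le e^{\Lambda T}$, all these contributions are absorbed into the right-hand side of \eqref{3.39}.

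The heart of the proof is the self-interaction $I$; the naive bound $|I|\lesssim r^{-2}\mu_i$ is not good enough, because the iteration in the next subsection can close only if $\kappa$ carries the smallness $\varepsilon+\sqrt{\nu}$. I would (i) symmetrise $I$ in $x\leftrightarrow y$, so that $I=\frac1{2a_i}\iint[h(x)-h(y)]\cdot K(x-y)\,\omega_i(x)\omega_i(y)\,dx\,dy$ by the antisymmetry of $K$, and (ii) use that $h(x)$ is radial about $X_i$, hence orthogonal to the point-vortex velocity $K(x-X_i)$, which permits replacing $K(x-y)$ by $K(x-y)-K(x-X_i)$. I then partition the $y$-integration according to the size of $|y-X_i|$. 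If $|y-X_i|<\rho/2$, the segment joining $x-y$ and $x-X_i$ stays at distance $\gtrsim\rho$ from the origin (since $x$ lies in the annulus), so a mean-value estimate gives $|K(x-y)-K(x-X_i)|\lesssim|y-X_i|\,\rho^{-2}$, and $\frac1{a_i}\int|y-X_i|\,\omega_i\,dy\lesssim W_i$ together with Proposition \ref{lem3.3} yields a contribution $\lesssim\tfrac{W_i}{r\rho^2}\mu_i\lesssim\tfrac{e^{\Lambda T}(\varepsilon+\sqrt{\nu})}{r\rho^2}\mu_i$. If $|y-X_i|\ge\rho/2$, the point $y$ sits in the low-vorticity region; there I split further. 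When $|x-y|<\rho/4$ (which automatically forces $|y-X_i|\gtrsim\rho$), I use $|[h(x)-h(y)]\cdot K(x-y)|\lesssim\|\nabla^2\eta_{\rho,r}\|_{\infty}\lesssim r^{-2}$ and bound the relevant mass by $m_i(t,\rho/2)$, which by \eqref{3.35} is $\lesssim e^{\Lambda T}\rho^{-2}(\varepsilon^2+\nu)$, giving $\lesssim\tfrac{e^{\Lambda T}(\varepsilon^2+\nu)}{r^2\rho^2}\mu_i$. When $|x-y|\ge\rho/4$, then $|K(x-y)|\lesssim\rho^{-1}$ and $|h(x)|\lesssim r^{-1}$, and bounding the $y$-mass again by \eqref{3.35} yields $\lesssim\tfrac{e^{\Lambda T}(\varepsilon^2+\nu)}{r\rho^3}\mu_i$, which is admissible because $r\le\rho$. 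Collecting all terms and integrating in $t$ over $[0,t]\subset[0,T]\subset[0,1]$ gives \eqref{3.38} with $\kappa$ as in \eqref{3.39}.

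The step I expect to be delicate is exactly this treatment of $I$: one must extract the smallness $\varepsilon+\sqrt{\nu}$ (respectively $\varepsilon^2+\nu$) from the Chebyshev-type bound \eqref{3.35} and from the second-moment estimate of Proposition \ref{lem3.3}, and this forces the case distinction above, relying simultaneously on two independent cancellations — the antisymmetry of $K$ near the diagonal, and the radiality of $\nabla\eta_{\rho,r}$ (its orthogonality to the leading point-vortex field) away from it. Tracking which power of $\varepsilon+\sqrt{\nu}$ survives in each region, and checking that all resulting powers of $\rho$ and $r$ fit inside the envelope \eqref{3.39}, is the main bookkeeping effort.
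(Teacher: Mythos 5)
Your proposal is correct and follows essentially the same route as the paper: the same time-differentiation and integration by parts, the same decomposition into self-interaction, external-field, and diffusive terms, the same $x\leftrightarrow y$ symmetrisation, and the same second-moment/Chebyshev inputs from Proposition~\ref{lem3.3} and \eqref{3.35}. Your handling of the near-diagonal self-interaction --- subtracting $K(x-X_i)$ via the orthogonality $\nabla\eta(x-X_i)\perp K(x-X_i)$ and then a mean-value estimate --- is a cosmetic variant of the paper's device, which instead uses $(x-y)\perp K(x-y)$ to replace $x-X_i$ by $y-X_i$ inside $\nabla\eta(x-X_i)\cdot K(x-y)$; both exploit $z\perp K(z)$ and yield the same $|y-X_i|/\rho^2$ gain, and your further subdivision of the far region at $|x-y|=\rho/4$ is unnecessary (the Lipschitz bound on $\nabla\eta$ already gives $\left|\left[\nabla\eta(x-X_i)-\nabla\eta(y-X_i)\right]\cdot K(x-y)\right|\lesssim r^{-2}$ uniformly there) but harmless.
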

	\begin{proof}
		Up to dividing by $a_i$, we can suppose without loss of generality that $\omega_i\ge0$ and $a_i=1$. Moreover, we often neglect the time $t$  and we write $\mu_i(t,\rho) = \mu_i(t,\rho,r)$ for notational convenience.
		
		Because $\omega_i$ solves the advection-diffusion equation \eqref{3.5}, and thanks to the fact that
		\begin{equation}\label{3.18}
			\frac{d}{dt}X_i(t) = \frac{1}{a_i} \int_{\mathbb{R}^2} u(t,x)\omega_i(t,x)\,dx = \frac{1}{a_i} \int_{\mathbb{R}^2} F_i(t,x)\omega_i(t,x)\,dx,
		\end{equation}
		the second identity being true by the fact that the self-interaction term vanishes because of the property $K(z)=-K(-z)$, we can compute
		\begin{equation*}
			\begin{split}
				\frac{d}{dt}\mu_i(t,\rho) &= \frac{d}{dt}X_i \cdot \int_{\R^2} \nabla\eta(x-X_i)\,\omega_i(x)\,dx \\
				&\qquad- \int_{\R^2} (u_i(x)+F_i(x))\cdot\nabla\eta(x-X_i)\,\omega_i(x)\,dx \\
				&\qquad\qquad -\nu \int_{\R^2} \Delta \eta(x-X_i)\, \omega_i(x)\,dx \\
				&= -\int_{\R^2}\int_{\R^2} K(x-y)\cdot \nabla\eta(x-X_i)\, \omega_i(y)\,\omega_i(x)\,dy\,dx \\
				&\qquad + \int_{\R^2}\int_{\R^2} [F_i(y)-F_i(x)]\cdot\nabla\eta(x-X_i)\,\omega_i(y)\,\omega_i(x)\,dy\,dx \\
				&\qquad\quad -\nu \int_{\R^2} \Delta \eta(x-X_i)\, \omega_i(x)\,dx \\
				&= I_1 + I_2 + I_3.
			\end{split}
		\end{equation*}
Using once more $K(z)=-K(-z)$ we may write
		\begin{equation*}
			I_1 = \frac12 \int_{\R^2}\int_{\R^2} (\nabla \eta(y-X_i)-\nabla\eta(x-X_i)) \cdot K(x-y)\,\omega_i(x)\,\omega_i(y)\,dx\,dy.
		\end{equation*}
		We notice that the integrand in non-zero only if either $\rho<|x-X_i|<\rho+r$ or $\rho<|y-X_i|<\rho+r$.
		We can therefore split the integration domain into the sets
		\begin{align*}
			A = \left(B_{\rho+r}(X_i)\setminus B_{\rho}(X_i)\right) \times B_{\frac{\rho}2}(X_i),\quad
			B = B_{\frac{\rho}2}(X_i) \times \left(B_{\rho+r}(X_i)\setminus B_{\rho}(X_i)\right),\\
			C  = ((B_{\rho+r}(X_i)\setminus B_{\rho}(X_i))\times B_{\frac{\rho}2}^c(X_i))\cup (B_{\frac{\rho}2}^c(X_i)\times (B_{\rho+r}(X_i)\setminus B_{\rho}(X_i))),
		\end{align*}
		and we denote by $I_1^A$, $I_1^B$ and $I_1^C$ the contributions to $I_1$ due to the sets $A$, $B$ and $C$ respectively. The terms $I_1^A$ and $I_1^B$ can be estimated in almost the same way. Using the radial symmetry of the cut-off function and orthogolality, we notice that
		\[
		\grad\eta(x-X_i)\cdot K(x-y)  = \eta'(|x-X_i|) \frac{x-X_i}{|x-X_i|}\cdot K(x-y) = \eta'(|x-X_i|) \frac{y-X_i}{|x-X_i|}\cdot K(x-y),
		\]
		and thus, since on their domain of integration it holds that $|x-y|\ge \rho/2$, using the scaling of the gradient of  $\eta$ from \eqref{3.36} and Proposition \ref{lem3.3} , we see that
		\begin{align*}
			|I_1^A| &   = \left|\frac{1}{2}\iint_A |\eta'(|x-X_i|) | \frac{|y-X_i|}{|x-X_i|}| K(x-y)|\omega_i(x)\omega_i(y)\, dxdy\right| \\
			&\lesssim \frac1{r\rho^2}     m_i(t,\rho)W_i(t)\\
			&\lesssim \frac{e^{\Lambda t}}{\rho^2} \frac{\eps+\sqrt{\nu t}}r m_i(t,\rho),
		\end{align*}
and the estimate of $I_1^B$ proceeds analogously. To estimate $I_1^C$ we use the Lipschitz condition \eqref{3.36} and estimate \eqref{3.35} to obtain
		\begin{align*}
			|I_1^C| &\lesssim	\frac{1}{r^2} \iint_C |x-y||K(x-y)| \omega_i(x)\omega_i(y)\, dxdy \\
			&\lesssim \frac{1}{r^2} m_i\left(t,\frac{\rho}{2}\right) m_i(t,\rho)  \\
			& \lesssim \frac{e^{\Lambda t}}{\rho^2}\frac{\eps^2+\nu t}{r^2}\,m_i(t,\rho).
		\end{align*}
		
		Now we consider $I_2$. We split $F_i = F_i^L + F_i^B$,
		\begin{equation*}
			\begin{split}
				I_2 &= \int_{\R^2}\int_{\R^2} [F^L_i(y)-F^L_i(x)]\cdot\nabla\eta(x-X_i)\,\omega_i(y)\,\omega_i(x)\,dydx \\
				& \quad + \int_{\R^2}\int_{\R^2} [F^B_i(y)-F^B_i(x)]\cdot\nabla\eta(x-X_i)\,\omega_i(y)\,\omega_i(x)\,dydx \\
				& = I_{21} + I_{22}.
			\end{split}	
		\end{equation*}
		We use the Lipschitz-continuity of $F_i^L$ given by Lemma \ref{lem3.1}, property \eqref{3.36}, the triangle and Jensen's inequalities and Lemma \ref{lem3.3} to write
		\begin{equation*}
			\begin{split}
				|I_{21}| &\lesssim \frac{1}{r} \int_{\R^2} \int_{\rho<|x-X_i|<\rho+r} |x - y| \, \omega_i(x)\,\omega_i(y)\,dx\,dy \\
				& \lesssim \frac{1}{r} \int_{\rho<|x-X_i|<\rho+r} |x - X_i| \, \omega_i(x) \,dx \\
				& \quad + \frac{1}{r} \left( \int_{|x - X_i|> \rho} \omega_i(x)\,dx \right)  \left( \int_{\R^2} |y - X_i| \,\omega_i(y)\,dy \right) \\
				& \lesssim \left(1+\frac{\rho}{r}\right)\, m_i(t,\rho) + e^{\Lambda t} \frac{\eps+\sqrt{\nu t}}{r}\, m_i(t,\rho).
			\end{split}
		\end{equation*}
Now we consider $I_{22}$. By our choice of $r$ and $\rho$, it holds that $\rho + r \le d/6$, and thus the integrand vanishes when $x\notin B_{d/6}(X_i)$. We can therefore split the integration domain of $I_{22}$ into the following sets,
		\begin{equation*}
			D = B_{d/6}(X_i) \times B_{d/6}(X_i), \quad  E = B_{d/6}(X_i) \times B_{d/6}(X_i)^c,
		\end{equation*}
		and investigate the two contributions $I_{22}^D$ and $I_{22}^E$ separately. On the set $D$, we use \eqref{3.36} so that
		\begin{equation*}
			|I_{22}^D| \lesssim \|F_i^B\|_{L^\infty(B_{d/6}(X_i))} \frac{1}{r} m_i(t,\rho).
		\end{equation*}
Thanks to Lemma \ref{lem3.1}  and property \eqref{3.t1}, it holds that
		\begin{equation*}
				\|F_i^B\|_{L^\infty(B_{d/6}(X_i))} \lesssim  (\eps^\alpha+(\nu t)^{\frac{\alpha}{2}})^\theta \sum_{j\not=i}\|\omega_j\|_{L^p}^{1-\theta},
		\end{equation*}
		and then, by the non-increasing property \eqref{3.4} of the $L^p$ norm, the scaling assumption \eqref{3.30}, condition \eqref{3.alpha} on $\alpha$ and the bound \eqref{3.14} on $\omega_i$, we find
		\begin{equation}\label{3.37}
			\|F_i^B\|_{L^\infty(B_{d/6}(X_i))} \lesssim \begin{cases}
				 \eps^{\alpha\theta-(1-\theta)\gamma} <  \eps &\text{if } \sqrt{\nu t} \le \eps \\
				 (\nu t)^{\frac{\alpha \theta}{2}-(1-\theta)\frac{p-1}{p}} <  \sqrt{\nu t} &\text{if } \sqrt{\nu t} > \eps,
			\end{cases}
		\end{equation}
		also because $\nu t< 1$.
		Hence
		\begin{equation*}
			|I_{22}^D| \lesssim \frac{\eps+\sqrt{\nu t}}{r} m_i(t,\rho).
		\end{equation*}
		We estimate the contribution $I_{22}^E$ due to the set $E$ as
		\begin{equation*}
			\begin{split}
				|I_{22}^E|&\lesssim \|F_i^B\|_{L^{\infty}} \frac1{r} m_i(t,\rho)m_i(t,d/6)\\
				&\lesssim \|F_i^B\|_{L^\infty(B_{d/6}(X_i)^c)} \, m_i(t,d/6)\, \frac{1}{r} \,m_i(t,\rho) + \|F_i^B\|_{L^\infty(B_{d/6}(X_i))} \frac{1}{r} \,m_i(t,\rho).
			\end{split}
		\end{equation*}
		Now, thanks to the first inequality in \eqref{3.t1} and Lemma \ref{3.1}, we have
		\begin{equation*}
			\|F_i^B\|_{L^\infty(B_{d/6}(X_i)^c)} \, m_i(t,d/6) \lesssim  \|\omega(t)\|_{L^p} \, (\eps^\alpha+(\nu t)^{\frac{\alpha}{2}}).
		\end{equation*}
		Invoking property \eqref{3.4}, assumption \eqref{3.30}, the heat-kernel-type bound \eqref{3.14}, and condition \eqref{3.alpha} on $\alpha$, we may hence write	
		\begin{equation*}
				\|F_i^B\|_{L^\infty(B_{d/6}(X_i)^c)} \, m_i(t,d/6) \lesssim \begin{cases}
					  \eps^{\alpha-\gamma} <   \eps &\quad\text{if } \sqrt{\nu t} \le \eps \\
					  (\nu t)^{\frac{\alpha}{2}-1+\frac{1}{p}} <   \sqrt{\nu t} &\quad\text{if } \sqrt{\nu t} > \eps.
				\end{cases}
		\end{equation*}
		From this and \eqref{3.37}, we see that
		\begin{equation*}
			|I_{22}^E|\lesssim \frac{ \eps+\sqrt{\nu t}}r \,m_i(t,\rho).
		\end{equation*}

		The term $I_{3}$ can be easily estimated using the bound \eqref{3.36},
		\begin{equation*}
			|I_3| \lesssim \frac{\nu}{r^2} m_i(t,\rho).
		\end{equation*}
		
		In conclusion, thanks to relation \eqref{3.51} and $r\le \rho\lesssim  1$, 
		\begin{align*}
			\left| \frac{d}{dt} \mu_i(t,\rho) \right| &\lesssim \frac{e^{2\Lambda T}}{\rho^2}  \left(\frac{\eps+\sqrt{\nu}}r + \frac{\eps^2+\nu}{r^2}\right)\mu_i(t,\rho-r)+\frac{\rho}r \mu_i(t,\rho-r).
		\end{align*}
		Integrating in time, we obtain the thesis.
	\end{proof}
	
	Iterating estimate \eqref{3.38}, we come to the lower bound of $T$.
	
	\begin{prop}\label{lem3.7}
Suppose that \eqref{3.45} holds and $\beta\ge 3\alpha$. Then there exist $\eps_0$, $\nu_0\in(0,1)$ and a constant $c$ such that for any $\eps\le \eps_0$ ad $\nu\le \nu_0$ it holds that $T\ge c$. If $T_c=\infty$ and $\delta>0$ in \eqref{317}, there is the stronger estimate
\[
T\ge c\log\frac1{\eps+\sqrt{\nu}}.
\]
	\end{prop}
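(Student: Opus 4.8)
The plan is to iterate the differential inequality \eqref{3.38} of Lemma \ref{lem3.6} along a \emph{geometrically} decreasing sequence of radii, so as to bound $m_i(T,d/6)$ by a quantity that contradicts \eqref{3.45} unless $T$ is large. Fix the index $i$ for which \eqref{3.45} holds.

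\emph{Set-up of the iteration.} I would pick a small $\lambda\in(0,1)$ and set $R_k=(d/6)\big(\tfrac{1-\lambda}{1+\lambda}\big)^{k}$, $\rho_k=R_k/(1+\lambda)$, $r_k=\lambda\rho_k$, so that $\rho_k+r_k=R_k$, $\rho_k-r_k=R_{k+1}=\rho_{k+1}+r_{k+1}$, and, crucially, $\rho_k/r_k=1/\lambda$ is independent of $k$. Let $n$ be the first index with $R_n\le\bar R$, where $\bar R=(\eps+\sqrt\nu)^{\delta'}$ for some small $\delta'\in(0,\min\{\delta,1/3\})$ (or $\bar R$ a fixed constant $\ge R$ if one only wants $T\ge c$); then $n\sim\log(1/\bar R)$, which is of order $\log\frac1{\eps+\sqrt\nu}$ precisely when $\delta>0$. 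If $T\le c\log\frac1{\eps+\sqrt\nu}$ with $c$ small, the bound \eqref{3.39} shows that $\kappa(\rho_k,r_k)$ is bounded by a \emph{constant} $C$ uniformly in $k,\eps,\nu$: the $\rho/r$ contribution is $1/\lambda$, while the contributions carrying $e^{2\Lambda T}$ come multiplied by $\eps+\sqrt\nu$, resp.\ $\eps^2+\nu$, and divided by powers of $\rho_n\gtrsim\bar R$, and these stay bounded exactly because $2\Lambda c<1-3\delta'$.

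\emph{The iteration.} Iterating \eqref{3.38} $n$ times — at each step passing from $\mu_i(\cdot,\rho_k,r_k)$ to $\mu_i(\cdot,\rho_{k+1},r_{k+1})$ through the sandwich \eqref{3.51} and $\rho_k-r_k=\rho_{k+1}+r_{k+1}$, estimating the boundary terms by the decay hypothesis \eqref{303}, $\mu_i(0,\rho_k,r_k)\le m_i(0,R)\le\eps^{\beta}$ (legitimate because $\rho_k\ge R$ for $k\le n-1$), and the innermost term crudely by $\mu_i\le 1$ — one arrives, since $m_i(T,d/6)\le\mu_i(T,\rho_0,r_0)$, at
\begin{equation*}
m_i(T,d/6)\ \lesssim\ e^{CT}\eps^{\beta}+\frac{(CT)^{n}}{n!}.
\end{equation*}
Now I would choose $c$ small and $\bar R$ a suitable power of $\eps+\sqrt\nu$. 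With $n\sim\log\frac1{\eps+\sqrt\nu}$ and $T\le c\log\frac1{\eps+\sqrt\nu}$, Stirling's formula gives $\frac{(CT)^{n}}{n!}\le(eCT/n)^{n}\le q_0^{\,n}=(\eps+\sqrt\nu)^{M}$ with $q_0<1$ and $M$ arbitrarily large (by taking $c$ small), hence $\le\tfrac12\big(\eps^{\alpha}+(\nu T)^{\alpha/2}\big)$; at the same time $e^{CT}\eps^{\beta}\le(\eps+\sqrt\nu)^{\beta-Cc}\le\tfrac12\big(\eps^{\alpha}+(\nu T)^{\alpha/2}\big)$ because $\beta\ge3\alpha$ leaves ample room. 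This contradicts \eqref{3.45}, so $T>c\log\frac1{\eps+\sqrt\nu}$; the assumption $T_c=\infty$ guarantees this is not pre-empted by the cutoff at $T_c$. The unconditional bound $T\ge c$ follows by the same scheme with $\bar R$ a fixed constant, so that $n$ is a fixed integer and the estimates go through only while $T$ stays below a fixed threshold. In both cases one runs the regimes $\sqrt{\nu T}\le\eps$ and $\sqrt{\nu T}>\eps$ in parallel, using \eqref{3.30} in the former and the heat-kernel bound \eqref{3.14} in the latter, exactly as in the proof of Proposition \ref{lem3.3}.

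\emph{Main obstacle.} The crux — and the reason to space the radii \emph{geometrically} rather than arithmetically — is to keep the iteration constant from \eqref{3.39} bounded while tolerating $T$ of logarithmic size: for such $T$ the factor $e^{2\Lambda T}$ is a negative power of $\eps+\sqrt\nu$, and an arithmetic spacing forces $\kappa$ to grow proportionally to the number of steps, which would only permit $T$ bounded. The geometric choice makes $\kappa(\rho_k,r_k)\sim1/\lambda$ independent of $\eps,\nu$ while simultaneously producing $n\sim\log\frac1{\eps+\sqrt\nu}$, which is exactly what lets $\frac{(CT)^{n}}{n!}$ beat $\eps^{\alpha}+(\nu T)^{\alpha/2}$ while keeping $e^{CT}\eps^{\beta}$ negligible.
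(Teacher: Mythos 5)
Your high-level strategy — iterate the differential inequality of Lemma~\ref{lem3.6} along a shrinking sequence of radii, estimate the boundary terms by the decay hypothesis~\eqref{303}, and beat the factorial with a Stirling-type estimate — is exactly the paper's. The bookkeeping, however, is genuinely different: you fix the step-to-radius ratio $r_k/\rho_k=\lambda$ once and for all, so that $\kappa(\rho_k,r_k)$ is a \emph{uniform constant} and the Stirling gain comes entirely from the $n!$ in the denominator. The paper instead takes $r_m/\rho_m\sim\frac{\sigma}{M}\log^\xi\frac{1}{\eps+\sqrt\nu}$, which depends on the total number of steps $M$; this makes $\kappa\sim M\log^{-\xi}\frac1{\eps+\sqrt\nu}$ large, which is then compensated by $M!$ via the auxiliary estimate~\eqref{300}. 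For $\delta>0$ your version is cleaner and, as far as I can see, correct: with $\bar R=(\eps+\sqrt\nu)^{\delta'}$, $\delta'<\delta$, you get $n\sim\log\frac1{\eps+\sqrt\nu}$, $\kappa\lesssim C$, and the Stirling bound $(eCT/n)^n$ does the job.

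The gap is in the unconditional bound $T\ge c$. You propose to run the same scheme with $\bar R$ a \emph{fixed} constant $\ge R$, so that $n$ is a fixed integer. But then the innermost term of the iteration contributes $\frac{(CT)^{n}}{n!}$, which for $T$ comparable to $c$ is a fixed \emph{positive} number, uniformly in $\eps$ and $\nu$. On the other hand, the defining identity~\eqref{3.45} forces $m_i(T,d/6)=\eps^\alpha+(\nu T)^{\alpha/2}\to 0$ as $\eps,\nu\to 0$. So the inequality $e^{CT}\eps^\beta+\frac{(CT)^n}{n!}<\eps^\alpha+(\nu T)^{\alpha/2}$ — which you need for a contradiction — cannot hold for small $\eps$ and $\nu$ with $n$ fixed. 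To control the tail term by a quantity that vanishes with $\eps$ and $\nu$ you must let the number of iterations diverge as $\eps,\nu\to 0$, and with $\bar R$ bounded below this is only possible by letting the ratio $r_k/\rho_k$ shrink, i.e.\ by abandoning a fixed $\lambda$. This is precisely what the paper does: it takes $M\sim\log\frac1\eps$ (resp.\ $M\sim\log\frac1\nu$) steps down to a \emph{fixed} radius $R$ by setting $\xi=0$ and $r_m\sim\rho_m/M$, at the cost of $\kappa\sim M$, and then the generalized Stirling inequality~\eqref{300} rescues the product. So for the $\delta=0$ branch you either have to adopt the paper's $M$-dependent step sizes, or find a different mechanism; the fixed geometric spacing alone does not close the argument.
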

	\begin{proof}We fix $i$ such that \eqref{3.45} holds.
		Without loss of generality, we suppose that $a_i=1$ and $\omega_i\ge 0$. Moreover, we assume that
		\begin{equation}
		\label{313}
		T\le \frac{1-2\delta}{4\Lambda}\log \frac{1}{\eps+\sqrt{\nu}},
		\end{equation}
		because otherwise, there is nothing left to prove.
		
		Let $M\in\N$ be a natural number. Our goal  to decrease the radii in $M$ steps from $d/6$ to $R$, so that $\rho_m = d/6-\sum_{n=1}^{m}r_n$ partitions $[R,d/6]$ for some $r_n\in(0,R)$. We want to choose $r_m$ so that
		\begin{equation}\label{315}
		\eps+\sqrt{\nu}\le r_m^{\frac{2}{1-2\delta}},
\end{equation}	
which implies, also thanks to \eqref{313}, that the constant in \eqref{3.39} is further estimated by
\begin{equation}
\kappa \lesssim \frac{(\eps+\sqrt{\nu})^{2\delta}}{\rho_m^2} +\frac{\rho_m}{r_m} .
\label{316}
\end{equation}
To specify our choice of $r_m$, we make the ansatz
\[
r_m = \frac{\sigma}M\left(\log\frac1{\eps+\sqrt{\nu}}\right)^{\xi} \rho_m ,
\]
for some $\sigma$ and $\xi$ in the interval $(0,1)$, which we will both fix later. Solving the previous identity for $r_m$, we find the iterative formula
\[
r_m = \chi\left(\frac{d}6-\sum_{n=1}^{m-1}r_n\right),\quad \chi =\frac{\frac{\sigma}M\left(\log\frac1{\eps+\sqrt{\nu}}\right)^{\xi}}{1+\frac{\sigma}M\left(\log\frac1{\eps+\sqrt{\nu}}\right)^{\xi}}\in(0,1),
\]
from which we deduce that
\[
r_m = \chi (1-\chi)^{m-1}\frac{d}6.
\]
It is readily verified that the radii $r_m$ are decreasing, $r_{m+1}< r_m$. We then compute that
\[
\rho_M= \frac{d}6-\sum_{m=1}^Mr_m  \ge \left(1-\chi\sum_{m=0}^{M-1}(1-\chi)^m\right)\frac{d}6 =(1-\chi)^M\frac{d}6 ,
\]
and the expression on the right-hand side is larger than $R$ only if
\begin{equation}
\label{319}
M\le \frac{\log\frac{d}{6R}}{\log\frac1{1-\chi}},
\end{equation}
which we suppose from here on. It follows via \eqref{317} that the first term in \eqref{316} is bounded by $1$, while the second term is larger than $1$ provided that
\begin{equation}
\label{318}
\sigma\left(\log\frac1{\eps+\sqrt{\nu}}\right)^{\xi} \le M.
\end{equation}
If $M$ can be chosen this way, which we will verify later, it follows that
\begin{equation}
\label{320}
\kappa(\rho_m,r_m) \lesssim M \left(\log\frac{1}{\eps+\sqrt{\nu}}\right)^{-\xi}.
\end{equation}

We now use relation \eqref{3.51} and apply Lemma \ref{lem3.6} to estimate
\begin{align*}
m_i(T,d/6) \le \mu_i(T,\rho_1,r_1) \le m_i(0,R) +\kappa(\rho_1,r_1)\int_0^T \mu_i(t_1,\rho_2,r_1)\, dt_1.
\end{align*}
The outer vorticity portions are monotone in $r$ in the sense that $\mu_i(t,\rho_m,r_{m-1})\le \mu_i(t,\rho_m,r_m)$, and thus, via an interation
		\begin{align*}
				\MoveEqLeft m_i(T,d/6) \\
				&\le  \sum_{m=0}^{M-1} \frac1{m!} \left(\prod_{n=1}^m\kappa(\rho_n,r_n)\right)  T^m m_i(0,R)\\
				&\quad + \left(\prod_{m=1}^M \kappa(\rho_m,r_m)\right) \int_{0}^{T}\int_{0}^{t_1}\ldots\int_{0}^{t_{M-1}}\mu_i(t_M,\rho_{M+1},r_M)\,dt_M\ldots dt_2 dt_1.
		\end{align*}
Using the trivial estimate $\mu_i(t,\rho,r)\le 1$, which holds true for all $t$, $\rho$ and $r$,  invoking the decay assumption of the initial configuration in \eqref{303}, the defining condition on $T$ in \eqref{3.45} and the bound in \eqref{320}, we obtain
		\begin{equation*}
			\eps^{\alpha} + (\nu T)^{\frac{\alpha}2} = m_i(T,d/6) \le \sum_{m=0}^{M-1} \frac1{m!} \left(\frac{C M  T }{\log^{\xi}\frac1{\eps+\sqrt{\nu}}}\right)^m   \eps^{\beta}+ \frac{1}{M!}\left(\frac{CMT}{\log^{\xi}\frac1{\eps+\sqrt{\nu}}}\right)^M,
		\end{equation*}
for some universal constant $C$. Using the Stirling formula $M^M < e^M M!$ and its generalization \eqref{300} in the appendix, we find that
\begin{equation}\label{102}
\begin{aligned}
\eps^{\alpha} + (\nu T)^{\frac{\alpha}2}& \le   \left(1+ \frac{CT}{\log^{\xi} \frac1{\eps+\sqrt{\nu}}}\right)^M\eps^{\beta} + \left(\frac{CT}{\log^{\xi}\frac1{\eps+\sqrt{\nu}}}\right)^M\\
&\le 2\left(\eps^{\frac{\beta}M} + \frac{CT}{\log^{\xi}\frac1{\eps+\sqrt{\nu}}}\right)^M,
\end{aligned}
\end{equation}
for some  new constant  $ C $. From here, we derive the desired bound by distinguishing two cases. 

\emph{Relatively small viscosity.} We deduce from \eqref{102} that
\begin{equation}
\label{322}
\eps^{\frac{\alpha}{M}} \le 2^{\frac1M}\left(\eps^{\frac{\beta}M} + \frac{CT}{\log^{\xi}\frac1{\eps+\sqrt{\nu}}}\right),
\end{equation}
and the left-hand side is uniformly bounded from below, say $\eps^{\frac{\alpha}{M}}= 1/2$ provided that 
\begin{equation}
\label{321}
\frac{\alpha }{\log 2}\log \frac1{\eps} = M .
\end{equation}
Before continuing, we want to make sure that such an $M$ is consistent with all the hypotheses we made so far, i.e., \eqref{315}, \eqref{319} and \eqref{318}. Let us start by  treating the case
\[
\eps\ge \sqrt{\nu}.
\]
First, regarding \eqref{319}, we notice that
\[
\frac1{1-\chi} = 1+\frac{\sigma}M \log^{\xi}\frac1{\eps+\sqrt{\nu}} \le 1+\frac{\sigma}{M} \log^{\xi}\frac1{\eps}  = 1+ \frac{\sigma\log 2}{\alpha}\frac1{\log^{1-\xi}\frac1{\eps}}.
\]
Hence, in order to guarantee \eqref{319}, it is enough to require that
\[
\frac{\alpha}{\log 2}  \log \left(1+ \frac{\sigma\log 2}{\alpha}\frac1{\log^{1-\xi}\frac1{\eps}}\right) \log \frac1{\eps}\le \log \frac{d}{6R}.
\]
The latter can be strengthend by estimating the logarithm linearly,
\[
\sigma \log^{\xi}\frac1{\eps} \le \log \frac{d}{6R}.
\]
If now \eqref{317} holds true with $\delta=0$, that is $R\gtrsim1$, we necessarily have to choose $\xi=0$  and this estimate holds true for $\sigma$ sufficiently small. Otherwise, if $\delta >0$, we are allowed to choose $\xi=1$ and we have to ensure that
\[
\sigma \log \frac1{\eps} \le \delta \log\frac1{\eps} -c,
\]
for some universal constant $c\ge 0$. This is possible whenever $\sigma\le \delta$ and $\eps$ sufficiently small.

We now turn to condition \eqref{318}, which can be rewritten as
\[
\sigma \log^{\xi} \frac1{\eps+\sqrt{\nu}} \le \frac{\alpha}{\log 2}\log\frac1{\eps},
\]
which can be strengthend if $\sqrt{\nu}$ on the left-hand side is dropped and $\xi $ is estimated by $1$. In this case, choosing $\sigma$ small does the job.

We finally turn to the worst case of \eqref{315}, which can be rewritten as
\[
(M-1)\log \frac1{1-\chi} + \log\frac1{\chi} \le \frac{1-2\delta}2 \log\frac1{\eps+\sqrt{\nu}} +\log\frac{d}6.
\]
Estimating $\eps\le \eps+\sqrt{\nu}\le 2\eps$ and arguing similarly as above, we notice that this estimate can be strengthend to
\[
M \frac{\sigma\log 2}{\alpha} \frac1{\log^{1-\xi}\frac1{\eps}} + \log\left(1+ \frac{M}{\sigma} \log^{-\xi} \frac1{2\eps}\right) \le \frac{1-2\delta}2\log \frac1{\eps} + \log \frac{d}{12}.
\]
In view of our defintion of $M$, this estimate can be rewritten as
\[
\sigma  \log^{\xi}\frac1{\eps}  + \log\left(1+ \frac{\alpha}{\sigma\log 2 } \log^{1-\xi} \frac1{2\eps}\right) \le \frac{1-2\delta}2\log \frac1{\eps} + \log \frac{d}{12},
\]
which holds for $\xi\in\{0,1\}$ true for any fixed $\sigma<\frac{1-2\delta}2$ and $\eps$ small enough.

Reviewing the previous arguments, it is easy to check that in the particular case  $\xi=\delta=0$,  the choice of $M$ is possible even if 
\begin{equation}\label{324}
\eps  \le \sqrt{\nu} \le \frac{c}{\log^2 \frac1{\eps}} ,
\end{equation}
for some constant $c>0$. Indeed, \eqref{319} and \eqref{318} are simply equivalent to $\sigma \le \log\frac{d}{6R}$ and $\sigma\le M$, while for \eqref{315}, it is enough to require that
\[
\log \left(1+\frac{M}{\sigma}\right) + M \log\left(1+\frac{\sigma}{M}\right)\le \frac12\log \frac1{\eps +\sqrt{\nu}} + \log \frac{d}{6}.
\]
Estimating the second logarithm on the left-hand side linearly, and supposing that $2 \sigma \le \log \frac{d}6$, the latter can be strengthend to
\[
1+ \frac{M}{\sigma} \le \left(\frac{d/6}{\eps+\sqrt{\nu}}\right)^{1/2}.
\] 
Using our choice of $M$ in \eqref{321} and solving for $\nu$ gives \eqref{324}.

Now that we have proved that \eqref{321} is admissible, we want to pick $\beta$ such that
\[
\beta \ge \alpha + (M+1)\frac{\log2}{\log\frac1\eps}.
\]
This choice allows us to deduce from \eqref{322} that
\[
2^{M+1}\eps^{\beta} \le \eps^{\alpha}\le 2^{M+1}\left(\frac{C T}{\log^{\xi}\frac1{\eps+\sqrt{\nu}}}\right)^M,
\]
which in turn implies that
\[
\log^{\xi}\frac1{\eps+\sqrt{\nu}} \lesssim T,
\]
where $\xi=0$ if $\delta=0$ and $\xi =1$ if $\delta>0$ in \eqref{317}. It remains to notice that $\beta=3\alpha$ is possible if $\eps$ is small.

\medskip

\emph{Relatively large viscosities.} Let us start with the particular situation where $\xi=\delta=0$ and  
\begin{equation}\label{325}
\sqrt\nu \ge \frac{c}{\log^2\frac1{\eps}} .
\end{equation}
We start again with \eqref{102}, from which we derive this time the estimate
\[
\nu T \le  2^{\frac2{\alpha}} \left(\eps^{\frac{\beta}M} +CT \right)^{\frac{2M}{\alpha}},
\]
and we choose $\beta$ such that  $\eps^{\frac{\beta}M}\le CT$, that is
\begin{equation}
\label{104}
M\log\frac{1}{CT} \le \beta\log\frac1{\eps}.
\end{equation}
Then we obtain
\[
\nu^{\frac{\alpha}{2M-\alpha}} \le 2^{\frac{2M+2}{2M-\alpha}} C^{\frac{2M}{2M-\alpha}} T\le 4C^2T,
\]
and the last inequality is true as long as $M\ge \alpha+1$. The left-hand side is uniformly bounded from below, say $\nu^{\frac{\alpha}{2M-\alpha}}=1/2$, if
\begin{equation}\label{334}
\frac{\alpha}2 \log\frac2{\nu}= M.
\end{equation}
In this case, we thus have that 
\begin{equation}\label{335}
T\ge 1/ (8C^2).
\end{equation}
 Notice that under the above estimate, the condition $M\ge \alpha+1$ is automatically satisfied if $\nu$ is sufficiently small. 
 
 We have to check if the choice \eqref{334} of $M$ aligns with the assumption \eqref{315}, recalling that \eqref{319} and \eqref{318} are readily verified for $\xi=0$ if $\sigma$ is sufficiently small. We rewrite \eqref{315} with $m=M$ as
 \[
 \left(\eps+\sqrt{\nu} \right)^{\frac{1-2\delta}2}\le \chi(1-\chi)^{M-1}\frac{d}6 =\frac{\sigma}{M+\sigma} \left(\frac{M}{M+\sigma}\right)^{M-1} \frac{d}6,
 \]
 Using $\eps\le \sqrt{\nu}$ and the linear estimate for the logarithm, we observe that this estimate follows from
 \[
 \sigma + \log\left(1+\frac{\alpha}{2 \sigma} \log\frac2{\nu}\right) \le \frac{1-2\delta}{2}  \log\frac1{\sqrt{\nu}} + \log\frac{d}{12},
 \]
 which holds true for $\nu$ small enough.
 
Having established \eqref{335},  we finally claim that \eqref{104} is satisfied with $\beta=3\alpha$, i.e.,
 \[
 M\log\frac1{CT} \le 3\alpha \log\frac1{\eps}.
 \]
 Using the definition of $M$ in \eqref{334} and the lower bound on $T$ in \eqref{335}, we observe that the latter holds true if
 \[
 \log\frac{2}{\nu}\log (8C) \le 6\log\frac1{\eps}.
 \]
 Fortunately, we have supposed an exponential bound on $\eps$ via \eqref{325}, so that the previous estimate is implied by
 \[
 \log \frac2{\nu} \log(8C) \le 6\left( \frac{ c}{\sqrt{\nu}}\right)^{1/2}.
 \]
 This, in turn, is true whenever $\nu$ is small enough.
 
 The last situation to consider is the case 
 \[
 \eps\le \sqrt{\nu}
 \]
 and $\delta>0$, so that $\xi=1$. Since the case $\delta=0$ is weaker then the case $\delta>0$, cf.~\eqref{303} and \eqref{317}, the bound on $T$ in \eqref{335} carries over  to the case $\delta>0$. Estimate \eqref{102} thus becomes
 \[
 \left(\frac{\nu}{8C^2}\right)^{\frac{\alpha}{2M}} \le 2^{\frac1M} \left(\nu^{\frac{\beta}{2M}} + \frac{CT}{\log\frac1{\eps+\sqrt{\nu}}}\right),
 \]
 and a lower bound of the form
 \[
 T\gtrsim \log\frac1{\eps+\sqrt{\nu}}
 \]
 can be established almost identically to the case $\eps\ge \sqrt{\nu}$.	\end{proof}

	\section*{Appendix: An elmentary estimate}

In this appendix, we prove the elementary auxiliary estimate
\begin{equation}
\label{300}
\sum_{m=0}^M \frac1{m!} (sM)^m \le (1+es)^M,
\end{equation}	
	for any $m\in \N$ and $s>0$. In fact, thanks to the binomial theorem, it is enough to establish
	\begin{equation}
	\label{301}
	M^m \le \frac{M!}{M-m!} e^m,
	\end{equation}
	for any $M\in \N$ and $m\in \{0,1,\dots,M\}$. For $m=M$, this  just follows from  Stirling's formula. Moreover, the inequality for $m \in\{0,1,2\}$ is readily verified. 
	
	In order to establish the general estimate,   we rewrite \eqref{301} for $m\ge 2$ as
	\begin{equation}\label{302}
	\frac1{e} \le F(m):= \prod_{k=1}^{m-1}f(k),\quad f(k) = e\left(1-\frac{k}M\right),
	\end{equation}
	and notice that $f(k)\ge 1$ precisely if $k\le k_*:=M\left(1-\frac1e\right)$. Therefore, the mapping $m\mapsto F(m)$ is increasing on $ [2,k_*+1]$ and decreasing on $[k_*+1,M]$. It follows that $F(m) \ge \min\{F(2),F(M)\}$. We deduce \eqref{302} (and thus \eqref{301}) because we had showed that  \eqref{301} (and thus \eqref{302}) hold true for $m=2$ and $m=M$.

	\section*{Acknowledgments}
	
	This work is funded by the Deutsche Forschungsgemeinschaft (DFG, German Research Foundation) under Germany's Excellence Strategy EXC 2044--390685587, Mathematics M\"unster: Dynamics Geometry Structure.

	\bibliography{euler}
	\bibliographystyle{acm}

\end{document}